\documentclass[12pt]{article}
\usepackage{amsmath,amssymb,amsfonts,amsthm,pgf,tikz,mathrsfs}
\usepackage{mathdots}
\usepackage[affil-it]{authblk}

\newtheorem{theorem}{Theorem}[section]
\newtheorem{lemma}[theorem]{Lemma}
\newtheorem{corollary}[theorem]{Corollary}
\newtheorem{proposition}[theorem]{Proposition}

\theoremstyle{definition}

\newtheorem{example}[theorem]{Example}
\newtheorem{obs}[theorem]{Observation}
\newtheorem{question}[theorem]{Question}

\theoremstyle{remark}

\numberwithin{equation}{section}

\newcommand{\disp}{\displaystyle}

\definecolor{dkgreen}{rgb}{0,0.5,0}
\definecolor{ltgreen}{rgb}{.6,1,0.6}
\definecolor{dkred}{rgb}{0.6,0.0,0}
\definecolor{dkblue}{rgb}{0,0.0,0.6}
\definecolor{ltblue}{rgb}{0.6,0.6,1}
\definecolor{gray}{rgb}{0.5,0.5,0.5}
\definecolor{mauve}{rgb}{0.58,0,0.82}
\definecolor{magenta}{rgb}{1,0,.5}

\usepackage{setspace}
\usepackage{color,soul}

\begin{document}
\title{An Analog of Matrix Tree Theorem for Signless Laplacians}
\author[1]{Keivan Hassani Monfared \thanks{The work of this author was partially supported by the Natural Sciences and Engineering Research Council of Canada (NSERC) and a postdoctoral fellowship of the Pacific Institute of Mathematical  Sciences (PIMS).}}
\affil[1]{\small Department of Mathematics and Statistics, University of Calgary, 2500 University Drive NW, Calgary, AB, T2N 1N4, Canada\\
k1monfared@gmail.com}

\author[2]{Sudipta Mallik}
\affil[2]{ Department of Mathematics and Statistics, Northern Arizona University, 801 S. Osborne Dr. PO Box: 5717, Flagstaff, AZ 86011, USA\\
sudipta.mallik@nau.edu}

\maketitle

\begin{abstract}
A spanning tree of a graph is a connected subgraph on all vertices with the minimum number of edges. The number of spanning trees in a graph $G$ is given by Matrix Tree Theorem in terms of principal minors of Laplacian matrix of $G$. We show a similar combinatorial interpretation for principal minors of signless Laplacian $Q$. We also prove that the number of odd cycles in $G$ is less than or equal to $\frac{\det(Q)}{4}$, where the equality holds if and only if $G$ is a bipartite graph or an odd-unicyclic graph.
\end{abstract}

\renewcommand{\thefootnote}{\fnsymbol{footnote}} 
\footnotetext{\emph{2010 Mathematics Subject Classification. 05C50,65F18\\ Keywords: Signless Laplacian Matrix, Graph, Spanning Tree, Eigenvalue, Minor.}}    
\renewcommand{\thefootnote}{\arabic{footnote}}

\section{Introduction}
	For a simple graph $G$ on $n$ vertices $1,2,\ldots,n$ and $m$ edges $1,2,\ldots, m$ we define its \emph{degree matrix} $D$,  \emph{adjacency matrix} $A$, and \emph{incidence matrix} $N$ as follows:
	\begin{enumerate}
		\item $D=[d_{ij}]$ is an $n \times n$ diagonal matrix where $d_{ii}$ is the degree of the vertex $i$ in $G$ for $i=1,2,\ldots,n$.
	
		\item $A=[a_{ij}]$ is an $n \times n$ matrix with zero diagonals where $a_{ij}=1$ if vertices $i$ and $j$ are adjacent in $G$ and $a_{ij}=0$ otherwise for $i,j=1,2,\ldots,n$.
		
		\item $N=[n_{ij}]$ is an $n \times m$ matrix whose rows are indexed by vertices and columns are indexed by edges of $G$. The entry $n_{ij} = 1$ whenever vertex $i$ is incident with edge $j$ (i.e., vertex $i$ is an endpoint of edge $j$) and $n_{ij}=0$ otherwise.
	\end{enumerate}
	
	We define the {\it Laplacian matrix} $L$ and {\it signless Laplacian matrix} $Q$ to be $L=D-A$ and $Q=D+A$, respectively. It is well-known that both $L$ and $Q$ have nonnegative real eigenvalues \cite[Sec 1.3]{BH}.  	Note the relation between the spectra of $L$ and $Q$:
	\begin{theorem}\cite[Prop $1.3.10$]{BH}\label{CB2}
	Let $G$ be a simple graph on $n$ vertices. Let $L$ and $Q$ be the Laplacian matrix and the signless Laplacian matrix of $G$, respectively, with eigenvalues $0=\mu_1\leq \mu_2\leq \cdots \leq \mu_n$ for $L$, and $\lambda_1\leq \lambda_2\leq \cdots \leq \lambda_n$ for $Q$. Then $G$ is bipartite if and only if 
	$\{\mu_1,\mu_2, \ldots ,\mu_n\}=\{\lambda_1, \lambda_2, \ldots ,\lambda_n\}.$
	\end{theorem}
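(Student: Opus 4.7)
My plan is to treat the two directions separately, with the factorization $Q = N N^T$ (where $N$ is the unsigned incidence matrix defined in the introduction) as the central tool, together with a diagonal $\pm 1$ similarity for the forward implication.

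For the forward direction, I would suppose $G$ is bipartite with parts $X$ and $Y$ and define a diagonal matrix $S$ by $S_{vv}=+1$ for $v\in X$ and $S_{vv}=-1$ for $v\in Y$. Since every edge of $G$ crosses the bipartition, a direct check gives $SAS=-A$ while $SDS=D$, so $SQS = S(D+A)S = D-A = L$. Because $S=S^{-1}$, the matrices $L$ and $Q$ are similar and therefore share the same multiset of eigenvalues.

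For the converse, I would compare the multiplicity of $0$ as an eigenvalue on each side. The indicator vectors of the connected components of $G$ span $\ker L$, so $\dim\ker L$ equals the number of connected components of $G$. From $Q=NN^T$ one obtains
\[
x^T Q x = \|N^T x\|^2 = \sum_{ij\in E(G)} (x_i + x_j)^2,
\]
so $Qx=0$ forces $x_i = -x_j$ along every edge. Restricted to a single connected component, such a nonzero $x$ exists precisely when that component admits a proper $2$-coloring, i.e.\ is bipartite; hence $\dim\ker Q$ equals the number of bipartite components of $G$. The spectral hypothesis forces these two dimensions to coincide, so every component of $G$ is bipartite, which means $G$ itself is bipartite.

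The step I expect to be the main obstacle is the converse implication, because it requires translating a purely spectral equality into a combinatorial structural property of $G$. The factorization $Q=NN^T$ is what makes this translation clean: it rewrites $x^T Q x$ as a sum of squares indexed by the edges, so that null vectors of $Q$ literally encode signed $\pm 1$ vertex labelings, i.e.\ bipartitions of the components on which they are supported. Matching the multiplicities of the zero eigenvalue of $L$ and $Q$ then forces bipartiteness of every component.
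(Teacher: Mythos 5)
Your proof is correct; note that the paper itself offers no proof of this statement, citing it directly from \cite[Prop 1.3.10]{BH}, and your argument is essentially the standard one behind that citation: the diagonal $\pm 1$ similarity $SQS=L$ for the bipartite direction, and for the converse a comparison of the multiplicities of the eigenvalue $0$, where your computation $x^TQx=\|N^Tx\|^2$ re-derives exactly the fact quoted as the paper's second theorem (that $\ker Q$ detects bipartite components). So there is nothing to object to, and no genuinely different route to compare against.
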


	\begin{theorem}\cite[Prop $2.1$]{CRC}
	The smallest eigenvalue of the signless Laplacian of a connected graph is equal to $0$ if and only if the graph is bipartite. In this case $0$ is a simple eigenvalue.
	\end{theorem}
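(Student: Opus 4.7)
My plan is to write $Q = NN^T$, where $N$ is the incidence matrix defined in the excerpt. A direct calculation shows $(NN^T)_{ii}$ equals the degree of vertex $i$ and $(NN^T)_{ij}$ equals the number of edges joining $i$ and $j$ (which is $a_{ij}$ for a simple graph). This factorization immediately implies that $Q$ is positive semidefinite, and hence that its smallest eigenvalue is $0$ if and only if $\ker Q$ is nontrivial. Moreover, $Qx = 0$ is equivalent to $x^T Q x = \|N^T x\|^2 = 0$, i.e., to $N^T x = 0$; and $N^T x = 0$ unpacks (row by row of $N^T$) into the edge condition $x_i + x_j = 0$ for every edge $\{i,j\}$ of $G$.

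With this characterization of $\ker Q$ in hand, I would prove the reverse direction first, as it is quick. If $G$ is bipartite with parts $V_1, V_2$, define $x$ by $x_v = 1$ on $V_1$ and $x_v = -1$ on $V_2$. Then $x_i + x_j = 0$ for every edge, so $x \in \ker Q$, and $x \neq 0$ shows $0$ is an eigenvalue.

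For the forward direction, suppose $0$ is an eigenvalue and let $x \neq 0$ satisfy $Qx = 0$. By the edge condition, $x_j = -x_i$ whenever $i \sim j$. Fix a vertex $v_0$ with $x_{v_0} \neq 0$ (such a vertex must exist because connectivity together with the recurrence $x_j = -x_i$ forces every component of $x$ on which the signal propagates to vanish if any one does). Now define $V_1 = \{v : x_v = x_{v_0}\}$ and $V_2 = \{v : x_v = -x_{v_0}\}$. Connectivity and the edge condition show $V_1 \cup V_2$ covers all of $V(G)$, and the edge condition forces every edge to go between $V_1$ and $V_2$. Thus $G$ is bipartite.

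For the simplicity claim, the same analysis shows more: in a connected bipartite graph, once we fix the value $x_{v_0}$, the value $x_v$ at every other vertex is forced to be $\pm x_{v_0}$ by propagating the sign along any path from $v_0$ to $v$; bipartiteness guarantees this is path-independent. Hence $\ker Q$ is one-dimensional, i.e., $0$ is a simple eigenvalue. The only delicate step I anticipate is making precise that $x$ cannot be ``partially zero'' in a connected graph — but this is immediate from $x_j = -x_i$ along edges, which shows $\{v : x_v = 0\}$ is a union of connected components.
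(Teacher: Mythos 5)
Your proof is correct. Note that the paper itself offers no proof of this statement --- it is quoted from \cite{CRC} as a known result --- so there is nothing internal to compare against; what you have written is a complete, self-contained argument, and it is essentially the standard one: the factorization $Q=NN^{T}$ (which the paper itself records later as Observation \ref{CB3}(a)) gives positive semidefiniteness, the identity $x^{T}Qx=\|N^{T}x\|^{2}=\sum_{\{i,j\}\in E}(x_i+x_j)^{2}$ characterizes $\ker Q$ by the edge condition $x_i+x_j=0$, the $\pm 1$ vector on the two parts handles one direction, and sign propagation along paths in a connected graph handles the other direction together with simplicity (injectivity of $x\mapsto x_{v_0}$ on $\ker Q$, plus symmetry of $Q$ so that geometric and algebraic multiplicities agree). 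Two cosmetic points, neither a gap: a vertex $v_0$ with $x_{v_0}\neq 0$ exists simply because $x\neq 0$ (the connectivity argument is only needed afterwards, to show the zero set is empty, as you do at the end); and the ``path-independence'' of the propagated sign needs no appeal to bipartiteness, since $x$ is a fixed vector already satisfying the edge condition, so every path from $v_0$ to $v$ necessarily produces the same value $x_v$.
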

	
We use the following notation for submatrices of an $n \times m$ matrix $M$: for sets $I \subset \{ 1,2,\ldots,n\}$ and $J\subset \{ 1,2,\ldots,m\}$, 
	\begin{itemize}
		\item $M[I ; J]$ denotes the submatrix of $M$ whose rows are indexed by $I$ and columns are indexed by $J$.
		\item $M(I ; J)$ denotes the submatrix of $M$ obtained by removing the rows indexed by $I$ and removing the columns indexed by $J$.
		\item $M(I ; J]$ denotes the submatrix of $M$ whose columns are indexed by $J$, and obtained by removing rows indexed by $I$.
	\end{itemize}
	
	We often list the elements of $I$ and $J$, separated by commas in this submatrix notation, rather than writing them as sets. For example, $M(2 ; 3,7,8]$ is a $(n-1) \times 3$ matrix whose rows are the same as the rows of $M$ with the  the second row deleted and columns are respectively the third, seventh, and eighth columns of $M$. Moreover, if $I=J$, we abbreviate $M(I ; J)$ and $M[I ; J]$ as $M(I)$ and $M[I]$ respectively. Also we abbreviate $M(\varnothing ; J]$ and $M(I;\varnothing)$ as $M(;J]$ and $M(I;)$ respectively. 
	
	A {\it spanning tree} of $G$ is a connected subgraph of $G$ on all $n$ vertices with minimum number of edges which is $n-1$ edges. The number of spanning trees in a graph $G$ is denoted by $t(G)$ and is given by  Matrix Tree Theorem:
	
	\begin{theorem}[Matrix Tree Theorem]\cite[Prop $1.3.4$]{BH}\label{MTT}
	 Let $G$ be a a simple graph on $n$ vertices and $L$ be the Laplacian matrix of $G$ with eigenvalues $0=\mu_1\leq \mu_2\leq \cdots \leq \mu_n$. Then the number $t(G)$ of spanning trees of $G$ is 
	 $$t(G)=\det\left(L(i)\right)=\frac{\mu_2\cdot \mu_3 \cdots \mu_n}{n},$$
	 for all $i=1,2,\ldots,n$.
	\end{theorem}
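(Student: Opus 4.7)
My plan is to prove Theorem~\ref{MTT} in two pieces: first the combinatorial identity $\det(L(i)) = t(G)$ for every vertex $i$, and then the spectral formula for this common value.

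For the combinatorial piece, I would replace the unsigned incidence matrix $N$ of the introduction by an oriented incidence matrix $\widetilde{N}$: fix an arbitrary orientation of each edge of $G$ and set $\widetilde{n}_{ve}$ equal to $+1$ if $v$ is the head of $e$, to $-1$ if $v$ is the tail, and to $0$ otherwise. A quick case analysis of the diagonal and off-diagonal entries of $\widetilde{N}\widetilde{N}^T$ gives $L = \widetilde{N}\widetilde{N}^T$. Deleting the $i$-th row and applying the Cauchy-Binet formula then yields
$$ \det(L(i)) \;=\; \det\!\bigl(\widetilde{N}(i;)\,\widetilde{N}(i;)^T\bigr) \;=\; \sum_{\substack{S \subseteq \{1,\ldots,m\}\\ |S|=n-1}} \det\!\bigl(\widetilde{N}(i;S]\bigr)^{2}. $$
The key lemma is that $\det(\widetilde{N}(i;S]) = \pm 1$ when the edges indexed by $S$ form a spanning tree of $G$ and $0$ otherwise. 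For the spanning-tree case I would induct on $n$: any tree on at least two vertices has at least two leaves, so some leaf $v \neq i$ exists; the row of $\widetilde{N}(i;S]$ indexed by $v$ contains a single $\pm 1$, and cofactor expansion along this row reduces the problem to the signed incidence matrix of the smaller spanning tree $T - v$. For the non-tree case, since $|S| = n-1$ the subgraph with edge set $S$ must contain a cycle; summing the edge-columns along the cycle with alternating signs matching the orientation gives zero, so the columns of $\widetilde{N}(i;S]$ are linearly dependent and the determinant vanishes.

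For the spectral piece, I would exploit $L\mathbf{1} = 0$ and $\mathbf{1}^T L = 0$. From $L\cdot\mathrm{adj}(L) = \det(L)\,I = 0$, every column of $\mathrm{adj}(L)$ lies in $\ker L = \mathrm{span}\{\mathbf{1}\}$; symmetry of $L$ (hence of $\mathrm{adj}(L)$) then forces $\mathrm{adj}(L) = \tau J$ for some scalar $\tau$, where $J$ is the all-ones matrix. Reading off the diagonal shows $\det(L(i)) = \tau$ for every $i$. Next, expanding $\det(xI - L) = x(x-\mu_2)\cdots(x-\mu_n)$ and identifying the coefficient of $x^1$ on both sides gives
$$ (-1)^{n-1}\mu_2\mu_3\cdots\mu_n \;=\; (-1)^{n-1}\sum_{i=1}^n \det(L(i)) \;=\; (-1)^{n-1} n\tau, $$
so $\tau = \mu_2\mu_3\cdots\mu_n/n$, completing the proof.

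The main obstacle is the spanning-tree lemma, and this is the place where the signing of $\widetilde{N}$ is genuinely essential: the analogous calculation with the unsigned $N$ fails precisely because cycles in $(V,S)$ no longer produce a column dependence, and the resulting count is instead the signless-Laplacian invariant that is the subject of the present paper. Care is also needed in the inductive step to confirm that a leaf distinct from $i$ always exists, but this follows from the two-leaf property of trees on $\ge 2$ vertices together with a trivial base case.
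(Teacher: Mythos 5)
Your proof is correct and follows exactly the route the paper itself points to: the decomposition of $L$ as $\widetilde{N}\widetilde{N}^T$ with an oriented incidence matrix, Cauchy--Binet, and the $\pm 1$/$0$ dichotomy for $\det(\widetilde{N}(i;S])$, which is precisely the signed analogue of the argument the paper carries out for $Q$ in Theorems \ref{mainlemma} and \ref{mainresult}. The only cosmetic caveat is in the spectral step, where $\ker L = \Span\{\mathbf{1}\}$ requires $G$ connected; for disconnected $G$ one has $\rank L \leq n-2$, so $\mathrm{adj}(L)=0$ and both sides of the identity vanish, and the theorem still holds.
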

	
	We explore if there is an analog of the Matrix Tree Theorem for the signless Laplacian matrix $Q$. First note that unlike $\det\left(L(i)\right)$,  $\det\left(Q(i)\right)$ is not necessarily the same for all $i$ as illustrated in the following example.
	
	\begin{example}
	For the paw graph $G$ with its signless Laplacian matrix $Q$ in Figure \ref{paw},  $\det\left(Q(1)\right)=7\neq 3=\det\left(Q(2)\right)=\det\left(Q(3)\right)=\det\left(Q(4)\right)$.
	\begin{figure}
	\begin{center}
	\begin{tikzpicture}[colorstyle/.style={circle, fill, black, scale = .5}]
		
		\node (1) at (0,1)[colorstyle, label=above:$1$]{};
		\node (2) at (0,0)[colorstyle, label=below:$2$]{};
		\node (3) at (-1,-1)[colorstyle, label=below:$3$]{};
		\node (4) at (1,-1)[colorstyle, label=below:$4$]{};	
		
		\draw [] (1)--(2)--(3)--(4)--(2);
		\node at (2,0)[label=right:{$Q=\left[\begin{array}{cccc}
	1&1&0&0\\
	1&3&1&1\\
	0&1&2&1\\
	0&1&1&2
	\end{array}\right] $}]{};
	\end{tikzpicture}
	\caption{Paw $G$ and its signless Laplacian matrix $Q$}\label{paw}
	\end{center}
	\end{figure}
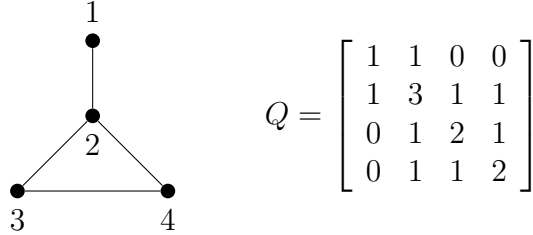
	\end{example}

	The Matrix Tree Theorem can be proved by the Cauchy-Binet formula:
	\begin{theorem}[Cauchy-Binet]\cite[Prop $1.3.5$]{BH}\label{CB}
		Let $m \leq n$. For $m\times n$ matrices $A$ and $B$, we have
		\[
			\det(AB^T)=\sum_{S} \det(A(;S]) \det(B(;S]),
		\]
		where the summation runs over $\binom{n}{m}$ $m$-subsets $S$ of $\{1,2,\ldots,n\}$.
	\end{theorem}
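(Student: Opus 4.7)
The plan is to prove the Cauchy-Binet formula by exploiting the multilinearity and alternating property of the determinant in the columns of $AB^T$. First I would write the $j$-th column of $C=AB^T$ as a linear combination of the columns of $A$: since $(AB^T)_{ij} = \sum_{k=1}^n A_{ik} B_{jk}$, the $j$-th column of $C$ equals $\sum_{k=1}^n B_{jk} \, A(;k]$, where $A(;k]$ denotes the $k$-th column of $A$.

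Next I would apply multilinearity of the determinant in each of the $m$ columns separately. This produces an $n^m$-fold sum indexed by tuples $(k_1, k_2, \ldots, k_m) \in \{1,\ldots,n\}^m$:
\[
\det(AB^T) = \sum_{k_1,\ldots,k_m} B_{1,k_1} B_{2,k_2} \cdots B_{m,k_m} \, \det\bigl(A(;k_1], A(;k_2], \ldots, A(;k_m]\bigr).
\]
Any tuple with a repeated index contributes zero because the determinant is alternating, so only tuples with $m$ distinct entries survive. I would then regroup these surviving tuples by the $m$-subset $S = \{s_1 < s_2 < \cdots < s_m\}$ they determine: every tuple of distinct indices corresponds uniquely to a pair $(S, \sigma)$ with $\sigma \in S_m$ and $k_i = s_{\sigma(i)}$.

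Within a fixed $S$, pulling out the sign of $\sigma$ from the determinant yields $\det\bigl(A(;k_1],\ldots,A(;k_m]\bigr) = \sign(\sigma) \det\bigl(A(;S]\bigr)$. The remaining sum over $\sigma$ collapses cleanly:
\[
\sum_{\sigma \in S_m} \sign(\sigma) \, B_{1, s_{\sigma(1)}} \cdots B_{m, s_{\sigma(m)}} = \det\bigl(B(;S]\bigr),
\]
by the Leibniz formula applied to the $m\times m$ matrix $B(;S]$. Summing over all $\binom{n}{m}$ subsets $S$ then gives the claimed identity.

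I expect the main obstacle to be purely bookkeeping: carefully justifying the reindexing from distinct tuples $(k_1,\ldots,k_m)$ to pairs $(S,\sigma)$ and tracking that the resulting sign matches the Leibniz expansion of $\det(B(;S])$. No analytic difficulty arises, since everything follows from the universal properties of the determinant; the proof is valid over any commutative ring, so no genericity or continuity argument is needed.
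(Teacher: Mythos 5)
Your argument is correct: writing each column of $AB^T$ as $\sum_k B_{jk}A(;k]$, expanding by multilinearity, discarding tuples with repeated indices by the alternating property, and regrouping the surviving tuples as pairs $(S,\sigma)$ so that the inner sum over $\sigma$ reassembles $\det(B(;S])$ via the Leibniz formula is the standard proof of Cauchy--Binet, and your sign bookkeeping ($\det(A(;s_{\sigma(1)}],\ldots,A(;s_{\sigma(m)}]) = \sign(\sigma)\det(A(;S])$) is handled correctly. Note, however, that the paper itself offers no proof of this statement: it is quoted as a known result from the reference [BH, Prop.\ 1.3.5] and used as a tool (via Observation \ref{CB3}) for the results on signless Laplacians, so there is no in-paper argument to compare against. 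Your proof simply supplies the omitted classical argument, and as you observe it works over any commutative ring, which is more than the paper needs since it only applies the formula to real (indeed $0$--$1$) incidence matrices.
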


The following observation provides a decomposition of the signless Laplacian matrix $Q$ which enables us to apply the Cauchy-Binet formula on it.
	
\begin{obs}\label{CB3}
		Let $G$ be a simple graph on $n\geq 2$ vertices with $m$ edges, and $m \geq n-1$. Suppose $N$ and $Q$ are the incidence matrix  and  signless Laplacian matrix of $G$, respectively. Then
		\begin{enumerate}
			\item[(a)] $Q=NN^T$,
			\item[(b)] $Q(i)=N(i;)N(i;)^T$, $i=1,2,\ldots,n$, and 
			\item[(c)] $\det(Q(i))=\det(N(i;)N(i;)^T)=\sum_{S} \det(N(i;S])^2,$
			where the summation runs over all $(n-1)$-subsets $S$ of $\{1,2,\ldots,m\}$, (by Cauchy-Binet formula \ref{CB}).
		\end{enumerate}
\end{obs}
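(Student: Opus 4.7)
The plan is to verify the three parts in order, since (b) follows from (a) by restriction and (c) is then an immediate application of Cauchy--Binet.

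For part (a), I would compute the $(i,j)$ entry of $NN^T$ directly from the definition of $N$. The entry equals $\sum_{k=1}^m n_{ik}n_{jk}$, where the term for edge $k$ is $1$ exactly when both $i$ and $j$ are endpoints of edge $k$, and $0$ otherwise. Splitting into the cases $i=j$ and $i\neq j$: when $i=j$, the sum counts the edges incident to $i$, giving $d_{ii}$; when $i\neq j$, in a simple graph at most one edge joins $i$ and $j$, so the sum equals $a_{ij}$. Thus $NN^T = D+A = Q$.

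For part (b), I would observe that deleting the $i$th row and $i$th column from a product of the form $MM^T$ is the same as deleting the $i$th row of $M$ first and then forming the product. More precisely, the $i$th row of $NN^T$ is row $i$ of $N$ times $N^T$, and the $i$th column of $NN^T$ is $N$ times column $i$ of $N^T$; removing both leaves the product of $N$ with the $i$th row removed and its transpose. Formally, $(NN^T)(i) = N(i;)\,N^T(;i) = N(i;)\,N(i;)^T$.

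For part (c), the matrix $N(i;)$ has size $(n-1)\times m$ with $n-1\leq m$ by hypothesis, so the Cauchy--Binet formula (Theorem \ref{CB}) applies with $A = B = N(i;)$. This yields
\[
\det\bigl(N(i;)N(i;)^T\bigr) = \sum_{S} \det(N(i;S]) \det(N(i;S]) = \sum_S \det(N(i;S])^2,
\]
where $S$ ranges over $(n-1)$-subsets of $\{1,2,\ldots,m\}$; combined with (b), this gives the claimed formula for $\det(Q(i))$.

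There is no real obstacle here: the observation is essentially a careful repackaging of the definitions of $N$, $Q$, and of submatrix notation, followed by an off-the-shelf invocation of Cauchy--Binet. The only point that deserves a sentence of attention is justifying (b), since one must be sure that removing row and column $i$ from $NN^T$ truly corresponds to deleting only row $i$ from $N$ (and row $i$ from $N^T$ transposes to column $i$ deletion) before the Cauchy--Binet step is meaningful.
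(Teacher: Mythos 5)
Your argument is correct and matches what the paper treats as immediate: the paper states this as an Observation without a written proof, relying on exactly the entrywise identity $NN^T=D+A$, the fact that deleting row and column $i$ of $NN^T$ corresponds to deleting row $i$ of $N$, and the cited Cauchy--Binet formula (Theorem \ref{CB}) with $A=B=N(i;)$. Your write-up simply makes these three routine steps explicit, so there is nothing to add or correct.
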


\section{Principal minors of signless Laplacians}
	In this section we find a combinatorial formula for a principal minor $\det(Q(i))$ for the signless Laplacian matrix $Q$ of a given graph $G$. We mainly use  Observation \ref{CB3}(c) given by Cauchy-Binet formula which involves determinant of submatrices of incidence matrices. This approach is completely different from the methods applied for related spectral results in \cite{CRC}. But we borrow the definition of $TU$-subgraphs from \cite{CRC} slightly modified as follows:  A {\it $TU$-graph} is a graph whose connected components are trees or odd-unicyclic graphs. A {\it $TU$-subgraph} of $G$ is a spanning subgraph of $G$ that is a $TU$-graph. The following lemma finds the number of trees in a $TU$-graph.
	
	\begin{lemma}\label{noofcyclesinTU}
		If $G$ is a $TU$-graph on $n$ vertices with $n-k$ edges consisting of $c$ odd-unicyclic graphs and $s$ trees,  then $s=k$.
	\end{lemma}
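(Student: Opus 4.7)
The plan is a direct edge-count, exploiting the well-known edge counts for trees and unicyclic graphs. Once the components are separated into the two prescribed types, the edge total is forced to be a function of the number of tree components alone, which yields $s = k$ upon comparison with the given edge count $n-k$.

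More concretely, I would let the $s$ tree components have orders $t_1, t_2, \ldots, t_s$, and the $c$ odd-unicyclic components have orders $u_1, u_2, \ldots, u_c$. Since the components partition the vertex set,
\[
\sum_{i=1}^{s} t_i + \sum_{j=1}^{c} u_j = n.
\]
Each tree on $t_i$ vertices contributes exactly $t_i - 1$ edges, and each unicyclic graph on $u_j$ vertices contributes exactly $u_j$ edges (the parity of the unique cycle is irrelevant for this count; it is used elsewhere in the paper for the signed incidence analysis). Summing,
\[
|E(G)| \;=\; \sum_{i=1}^{s}(t_i - 1) + \sum_{j=1}^{c} u_j \;=\; \Bigl(\sum_{i=1}^{s} t_i + \sum_{j=1}^{c} u_j\Bigr) - s \;=\; n - s.
\]

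Comparing with the hypothesis $|E(G)| = n - k$ forces $s = k$, completing the argument. There is essentially no obstacle here — the statement is a bookkeeping identity, and the only thing worth flagging is that the "odd" adjective on the unicyclic components plays no role in this particular lemma (it will matter for the subsequent signed-incidence determinant computations, where odd cycles are needed to ensure a nonzero contribution).
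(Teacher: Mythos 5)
Your proof is correct and is essentially identical to the paper's: both count edges componentwise ($t_i-1$ per tree, $u_j$ per unicyclic component) to get $n-k = n-s$, hence $s=k$. Your remark that the odd parity of the cycles is irrelevant here is accurate and matches the paper's usage.
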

	\begin{proof}
		Suppose the number vertices of the cycles are $n_1,n_2,\ldots,n_c$ and that of the trees are $t_1,t_2,\ldots,t_s$. Then the total number of edges is 
		\begin{align*}
			n-k = \sum_{i=1}^{c} n_i + \sum_{i=1}^{s} (t_i-1) = n - s
		\end{align*}
		which implies $s = k$.
	\end{proof}
	
Now we find the determinant of incidence matrices of some special graphs in the following lemmas.
	\begin{lemma}\label{incidencedet}
		If $G$ is an odd (resp. even) cycle, then the determinant of its incidence matrix is $\pm 2$ (resp. zero). 
	\end{lemma}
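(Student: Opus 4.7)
The plan is to compute $\det(N)$ explicitly by a single cofactor expansion. First I would label the cycle so that the vertices are $1,2,\ldots,n$ in cyclic order and the edges are $e_j = \{j,j{+}1\}$ for $j = 1,\ldots,n-1$ together with $e_n = \{n,1\}$. Since a cycle on $n$ vertices has $n$ edges, $N$ is a square $n\times n$ matrix, and each row and each column has exactly two nonzero entries, both equal to $1$.

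Next I would expand $\det(N)$ along row $1$, whose only nonzero entries lie in the columns indexed by $e_1$ and $e_n$. This gives
\[
  \det(N) \;=\; \det\bigl(N(1;e_1)\bigr) \;+\; (-1)^{n+1}\,\det\bigl(N(1;e_n)\bigr).
\]
Then I would argue that each of these two $(n-1)\times(n-1)$ cofactors is, up to a reordering of its columns, triangular with unit diagonal. For $N(1;e_1)$: column $e_n$ has a single $1$ (in row $n$, because row $1$ has been removed), and for $2\le j\le n-1$ the column $e_j$ has $1$'s only in rows $j$ and $j{+}1$; arranging the columns in the order $e_n,e_{n-1},\ldots,e_2$ yields an upper triangular matrix with $1$'s on the diagonal, so $|\det(N(1;e_1))| = 1$. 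An entirely analogous argument gives $|\det(N(1;e_n))| = 1$.

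Putting these together, $|\det(N)| = |1 + (-1)^{n+1}|$, which equals $2$ when $n$ is odd and $0$ when $n$ is even, as claimed. The only mildly delicate step is keeping the signs from the expansion and the column permutations consistent; for the even case this bookkeeping can be entirely avoided by the a-priori observation that a bipartite graph has $\det(N) = 0$ automatically (the signed $\pm 1$-indicator vector of the bipartition lies in the left null space of $N$), so the explicit cofactor computation really only needs to produce a nonzero value of magnitude $2$ in the odd case, which the calculation above supplies.
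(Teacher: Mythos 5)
Your proof is correct and is essentially the paper's own argument: the paper writes $N=PN'Q$ for the canonically labelled cycle and expands $N'$ along its first row, which is exactly your relabelling of vertices and edges followed by a cofactor expansion along the row of vertex $1$, yielding $1+(-1)^{n+1}$ up to sign. One small tightening: for the odd case, magnitudes of the two cofactors alone do not rule out cancellation, so you do need their actual values; but this is immediate from your own description if you simply keep the columns in their natural order, since then $N(1;e_1)$ (columns $e_2,\dots,e_n$) is lower triangular and $N(1;e_n)$ (columns $e_1,\dots,e_{n-1}$) is upper triangular, each with unit diagonal, so both cofactors equal exactly $+1$ and $\det(N)=1+(-1)^{n+1}$ with no permutation-sign bookkeeping at all.
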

	\begin{proof}
	Let $G$ be a cycle with the incidence matrix $N$. Then up to permutation we have 
	$$N=PN'Q=P
	\left[\begin{array}{cccccc}
	1&0&0&\cdots&0&1\\
	1&1&0&\cdots&0&0\\
	0&1&1&\ddots&\vdots&\vdots\\
	\vdots&\ddots &\ddots&\ddots&\ddots&\vdots\\
	\vdots&\vdots&\ddots&\ddots&1&0\\
	0&\cdots&\cdots&0&1&1\\
	\end{array}\right]Q,
	$$
	for some permutation matrices $P$ and $Q$. By a cofactor expansion across the first row we have  
	$$\det(N)=\det(P)\det(N')\det(Q)=(\pm 1)(1+(-1)^{n+1})(\pm 1).$$
	If $n$ is odd (resp. even), then $\det(N)= \pm 2$ (resp. zero).
	\end{proof}
	
	\begin{lemma}\label{incidencedetoddcyclic}
	If $G$ is an odd unicyclic (resp. even unicyclic) graph, then the determinant of its incidence matrix is $\pm 2$ (resp. $0$). 
	\end{lemma}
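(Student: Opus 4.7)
The plan is to reduce to Lemma~\ref{incidencedet} by peeling off pendant vertices one at a time. A unicyclic graph $G$ on $n$ vertices has exactly $n$ edges, so its incidence matrix $N$ is square. If $G$ is not itself a cycle, then since the degree sum is $2n$ and the graph is connected, not every vertex can have degree $\geq 2$ (otherwise all degrees would equal $2$ and $G$ would be a cycle). Hence $G$ must contain a pendant vertex $v$.

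First I would set up induction on $n - \ell$, where $\ell$ is the length of the unique cycle of $G$. The base case $n = \ell$ is exactly Lemma~\ref{incidencedet}: $G$ is an odd or even cycle, with incidence-matrix determinant $\pm 2$ or $0$ respectively. For the inductive step, assuming $n > \ell$, pick a pendant vertex $v$ with its unique incident edge $e$. The row of $N$ indexed by $v$ has a single nonzero entry (a $1$ in column $e$), so cofactor expansion along that row gives
\[
\det(N) = \pm \det(N'),
\]
where $N'$ is obtained from $N$ by deleting row $v$ and column $e$. But $N'$ is precisely the incidence matrix of $G' := G - v$, the graph obtained by removing the leaf $v$ and its incident edge.

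Next I would verify that $G'$ is still unicyclic with a cycle of the same parity as $G$. Removing a pendant vertex preserves connectivity, reduces the vertex and edge counts each by one, and leaves the unique cycle of $G$ untouched (since $v$, having degree one, does not lie on any cycle). Thus $G'$ has $n-1$ vertices, $n-1$ edges, and still contains the original cycle of length $\ell$; being connected with the same cyclomatic number, $G'$ is unicyclic with cycle length $\ell$. By the inductive hypothesis, $\det(N') = \pm 2$ if $\ell$ is odd and $\det(N') = 0$ if $\ell$ is even, giving the same conclusion for $\det(N)$.

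The argument is essentially routine once the cofactor-expansion reduction is set up; the only subtlety, and the step to get right, is the structural claim that any non-cycle unicyclic graph has a leaf whose removal yields a smaller unicyclic graph with a cycle of the same parity. Everything else is bookkeeping of signs inside an absolute value, which is exactly what the $\pm$ in the statement absorbs.
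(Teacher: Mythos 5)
Your proof is correct and follows essentially the same route as the paper's: induction on the number of vertices off the cycle, peeling a pendant vertex by cofactor expansion along its row, and reducing to Lemma~\ref{incidencedet} for the base case. The extra justification you give for the existence of a leaf and the preservation of the cycle's parity is sound and only makes explicit what the paper leaves implicit.
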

	\begin{proof}
	Let $G$ be a unicyclic graph with incidence matrix $N$ and $t$ vertices not on the cycle. We prove the statement by induction on $t$. If $t = 0$, then $G$ is an odd (resp. even) cycle and then $\det(N_i) = \pm 2$ (resp. $0$) by Lemma \ref{incidencedet}. Assume the statement holds for some $t \geq 0$. Let $G$ be a unicyclic graph with $t+1$ vertices not on the cylce. Then $G$ has a pendant vertex, say vertex $i$. The vertex $i$ is incident with exactly one edge of $G$, say $e_l = \{ i,j \}$. Then $i$th row of $N$ has only one nonzero entry which is the $(i,l)$th entry and it is equal to $1$. To find $\det(N)$ we have a cofactor expansion across the $i$th row and get 
		\[
			\det(N)=\pm 1 \cdot \big( \pm \det(N(i ; l)) \big).
		\]
		Note that $N(i ; l)$ is the incident matrix of $G(i)$, which is a unicyclic graph with $t$ vertices not on the cycle. By induction hypothesis, $\det(N(i ; l)) = \pm 2$ (resp. $0$).  Thus $\det(N)=\pm 1 \cdot \big( \pm \det(N(i ; l)) \big)=\pm 2$ (resp. $0$). 
	\end{proof}

By a similar induction on the number of pendant vertices we get the following result.
\begin{lemma}\label{tree_det}
Let $H$ be a tree with at least one edge and $N$ be the incidence matrix of $H$. Then $\det(N(i;)) = \pm 1$ for all vertices $i$ of $H$.
\end{lemma}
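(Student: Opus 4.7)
The plan is to prove $\det(N(i;)) = \pm 1$ by induction on $n$, the number of vertices of the tree $H$, in close analogy with the proof of Lemma \ref{incidencedetoddcyclic}. The base case $n=2$ is immediate: $H$ is a single edge, $N$ is the $2\times 1$ all-ones column, and deleting either row leaves the $1\times 1$ matrix $[1]$, whose determinant is $1$.

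For the inductive step, fix $n \geq 3$ and assume the claim holds for every tree on fewer than $n$ vertices. Let $H$ be a tree on $n$ vertices with incidence matrix $N$, and let $i$ be any vertex of $H$. Since every tree on at least two vertices has at least two pendant vertices (leaves), I can choose a pendant vertex $j$ of $H$ with $j \neq i$. Let $e$ be the unique edge of $H$ incident with $j$. Then the row of $N$ indexed by $j$, and therefore the corresponding row of $N(i;)$, has exactly one nonzero entry, namely a $1$ in the column indexed by $e$. A cofactor expansion of $\det(N(i;))$ across row $j$ yields
\[
\det(N(i;)) = \pm 1 \cdot \det\bigl(N(\{i,j\};\{e\})\bigr).
\]
The matrix $N(\{i,j\};\{e\})$ is precisely the incidence matrix of the tree $H - j$ on $n-1$ vertices (obtained by deleting the leaf $j$ together with its incident edge $e$), with the row indexed by $i$ removed. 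By the induction hypothesis applied to $H-j$, this determinant is $\pm 1$, completing the inductive step.

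The only subtlety worth flagging is the requirement that $j$ be chosen distinct from $i$; this is precisely why I invoke the fact that any tree on at least two vertices has at least two leaves, which covers both the case where $i$ is itself pendant and the case where it is not. Apart from this bookkeeping, the argument is structurally identical to that of Lemma \ref{incidencedetoddcyclic} with ``odd-unicyclic graph'' replaced by ``tree'' and the base case adjusted accordingly, so I do not anticipate any serious obstacle.
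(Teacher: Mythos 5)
Your proof is correct and follows essentially the same route the paper intends: the paper omits the details and simply says the lemma follows ``by a similar induction'' to Lemma \ref{incidencedetoddcyclic}, i.e., peeling off a pendant vertex and expanding the determinant along its row. Your write-up supplies exactly that induction, and your explicit care in choosing the leaf $j \neq i$ (using that a tree on at least two vertices has at least two leaves) is precisely the detail the paper's sketch leaves implicit.
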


\begin{lemma}\label{lem_tree_and_one_more_edge}
		Let $H$ be a graph on $n$ vertices and $n-1$ edges with incidence matrix $N$. If $H$ has a connected component which is a tree and an edge which is not on the tree, then $\det(N(i;)) = 0$ for all vertices $i$ not on the tree.
\end{lemma}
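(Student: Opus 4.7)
The plan is to exhibit a linear dependence among the columns of $N(i;)$, which immediately forces its determinant to vanish. The key ingredient is the block structure of $N$ inherited from the component structure of $H$.

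Let $T$ be the given tree component, with $k$ vertices and $k-1$ edges, and let $H'$ be the subgraph of $H$ on the remaining $n-k$ vertices. Since $T$ is a whole component, every edge of $H$ has both endpoints in $T$ or both in $H'$, so $H'$ carries the remaining $n-1-(k-1) = n-k$ edges; the hypothesis that some edge lies outside $T$ forces $n-k \geq 1$. After permuting rows (vertices) and columns (edges), one obtains
\[
N = \begin{pmatrix} N_T & 0 \\ 0 & N_{H'} \end{pmatrix},
\]
with $N_T$ of size $k \times (k-1)$ and $N_{H'}$ of size $(n-k) \times (n-k)$. For any vertex $i$ not on $T$, deletion of the $i$th row touches only the lower block, yielding
\[
N(i;) = \begin{pmatrix} N_T & 0 \\ 0 & N_{H'}(i;) \end{pmatrix}.
\]

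The lower block $N_{H'}(i;)$ has $n-k-1$ rows and $n-k$ columns, so its columns are linearly dependent in $\mathbb{R}^{n-k-1}$. The corresponding $n-k$ columns of $N(i;)$ are obtained by padding these vectors with zeros in the top $k$ coordinates, so they remain linearly dependent in $\mathbb{R}^{n-1}$. Consequently $N(i;)$ is singular and $\det(N(i;)) = 0$.

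The argument is essentially bookkeeping; the only mild point to watch is that the permutation used to expose the block form can at worst change the sign of the determinant, which of course does not affect vanishing. I do not anticipate any real obstacle.
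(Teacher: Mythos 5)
Your proof is correct, and it reaches the conclusion by a dual route to the paper's. The paper works inside the tree block: the rows of $N(i;)$ indexed by the vertices of $T$ have support only in the $k-1$ columns of $E(T)$ (the paper packages this as a singular $k\times k$ submatrix obtained by adjoining the zero column of an outside edge $e_j$), so those $k$ rows are linearly dependent. You instead work in the complementary block: after deleting row $i$ (which lies off $T$, so only the lower block shrinks), the $n-k$ columns indexed by edges outside $T$ live in an $(n-k-1)$-dimensional coordinate subspace and are therefore dependent. Both are rank-counting arguments on the same block-diagonal decomposition $N = N_T \oplus N_{H'}$, and the hypothesis ``$i$ not on the tree'' enters dually: in the paper it guarantees all $k$ tree rows survive the deletion, in yours it guarantees the non-tree block loses a row and becomes wide. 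Neither argument is stronger than the other --- your column count adapts just as easily to the $n$-edge analogue (Lemma \ref{tree_and_one_more_edge}, where no row is deleted) as the paper's row count does --- so the choice is a matter of taste; your version perhaps makes the role of the deleted row more transparent, while the paper's single argument is stated so that it transfers verbatim between the two lemmas.
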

\begin{proof} 		
Let $H$ have a connected component $T$ which is a tree and an edge $e_j$ which is not on $T$. Suppose $i$ is a vertex of $G$ that is not on $T$. If $T$ consists of just one vertex, then the corresponding row in $N(i;)$ is a zero row giving $\det(N(i;))=0$. Suppose  $T$ has at least two vertices. 
		Now consider  the square submatrix $N'$ of $N(i;)$ with rows corresponding to verteices of $T$ and columns corresponding to edges of $T$ together with $e_j$. Then the column of $N'$ corresponding to $e_j$ is a zero row giving $\det(N')=0$. Since entries in rows of $N_i[S]$ corresponding to $T$ that are outside of $N'$ are zero, the rows of $N(i;)$ corresponding to $T$ are linearly dependent and consequently $\det(N(i;))=0$.
\end{proof}

	Now we break down different scenarios that can happen to a graph with $n$ vertices and $m=n-1$ edges.
	
	\begin{proposition}\label{prop_enn_vert_ennminusone_edg}
		Let $H$ be a graph on $n$ vertices and $m = n-1$ edges. Then one of the following is true for $H$.
		\begin{enumerate}
			\item \label{prop_enn_vert_ennminusone_edg_case_tree} $H$ is a tree.
			\item \label{prop_enn_vert_ennminusone_edg_case_even_cycle} $H$ has an even cycle and a vertex not on the cycle.
			\item \label{prop_enn_vert_ennminusone_edg_case_non_tu} $H$ has no even cycles, but $H$ has a connected component with at least two odd cycles and at least two connected components which are trees.
			\item \label{prop_enn_vert_ennminusone_edg_case_tu} $H$ is a disjoint union of odd unicyclic graphs and exactly one tree, i.e., $H$ is a $TU$-graph.
		\end{enumerate}
	\end{proposition}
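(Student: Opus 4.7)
The plan is to argue by counting the cyclomatic numbers of the connected components. Let $H$ have components $C_1,\ldots,C_k$, with $C_i$ containing $n_i$ vertices and $m_i$ edges, and set $c_i := m_i - n_i + 1 \geq 0$. Summing over components yields
\[
\sum_{i=1}^{k} c_i \;=\; (n-1) - n + k \;=\; k - 1.
\]
This single identity will drive the entire case analysis.

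If $k = 1$ then $\sum c_i = 0$, so $H$ is a connected graph with $n-1$ edges and hence a tree, giving case (1). From now on assume $k \geq 2$, so $\sum c_i \geq 1$ and $H$ contains at least one cycle. I would next look for an even cycle: if $C$ is any even cycle of $H$, then $|E(C)| = |V(C)|$, and since $|E(H)| = n-1 < n$, the cycle $C$ cannot span all of $V(H)$, so some vertex of $H$ lies off $C$. That places $H$ in case (2).

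It remains to treat $k \geq 2$ with every cycle of $H$ odd. I would classify components by cyclomatic number: let $t$, $u$, and $b$ denote the numbers of components with $c_i = 0$ (trees), $c_i = 1$ (necessarily odd-unicyclic, by the no-even-cycle hypothesis), and $c_i \geq 2$, respectively, so that $t + u + b = k$. Combining $\sum c_i = k-1$ with the trivial lower bound $\sum c_i \geq u + 2b$ gives $u + 2b \leq t + u + b - 1$, i.e.\ $t \geq b+1$. If $b = 0$, equality in the count forces $t = 1$, so $H$ is a disjoint union of odd-unicyclic components and exactly one tree component — case (4). If $b \geq 1$, then $t \geq 2$ supplies the two tree components, and the component with $c_i \geq 2$ contains at least two distinct simple cycles (these are odd by hypothesis), completing case (3).

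The step I expect to require the most care is the last one: extracting \emph{two distinct simple (odd) cycles} from a component whose cyclomatic number is only known to be at least $2$. The clean way is to fix a spanning tree of the component and form the fundamental cycle of each non-tree edge; two distinct non-tree edges yield two simple cycles that are distinguished by the non-tree edge each contains, so they are genuinely different. Every other step is routine arithmetic with the cyclomatic-number identity, and the four cases are exclusive (tree vs.\ non-tree, then even-cycle vs.\ only-odd-cycles, then $b=0$ vs.\ $b\geq 1$) and exhaustive by construction.
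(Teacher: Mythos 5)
Your argument is correct and takes essentially the same approach as the paper: a component-wise count of edges minus vertices (your cyclomatic numbers $c_i$ play the role of the paper's $\ell$ and $t-k$), followed by the same four-way case split. The only differences are refinements of detail — your direct edge-count observation that an even cycle cannot span all $n$ vertices, and the fundamental-cycle justification that a component with $c_i \geq 2$ contains two distinct odd cycles, a step the paper asserts without explicit proof.
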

	
	\begin{proof}
		If $H$ is connected then it is a tree. This implies Case \ref{prop_enn_vert_ennminusone_edg_case_tree}. Now assume $H$ is not connected. If $H$ has no cycles, then it is a forest with at least two connected components. This would imply that $m < n-2$, contradicting the assumption that $m = n-1$. Thus $H$ has at least one cycle. Suppose $H$ has $t \geq 2$ connected components $H_i$ with $m_i$ edges and $n_i$ vertices, where the first $k$ of them have at least a cycle and the rest are trees. For $i= 1,\ldots, k$, $H_i$  has $m_i \geq n_i$. Note that
\begin{equation}\label{treenumber}
-1 = m - n = \sum_{i=1}^{t} (m_i - n_i) =  \sum_{i=1}^{k} (m_i - n_i) + \sum_{i=k+1}^{t} (m_i - n_i)
\end{equation}		
		
Since $H_i$  has a cycle for $i =1,\ldots,k$ and $H_i$ is a tree for $i = k+1,\ldots,t$,
		\[
			\ell := \sum_{i=1}^{k} (m_i - n_i) \geq 0,
		\]
		and 
		\[ 
			\sum_{i=k+1}^{t} (m_i - n_i) = -(t - k).
		\] 
Then $t-k = \ell + 1$ by (\ref{treenumber}). In other words, in order to make up for the extra edges in the connected components with cycles, $H$ has to have exactly $\ell + 1$ connected components which are trees. 
		
		 If $H$ has an even cycle, then $\ell \geq 0$ and hence $t-k \geq 1$. This means there is at least one connected component which is tree and it contains a vertex which is not in the cycle. This implies Case \ref{prop_enn_vert_ennminusone_edg_case_even_cycle}. Otherwise, all of the cycles of $H$ are odd. If it has more than one cycle in a connected component, then $\ell \geq 1$ and thus $t-k \geq 2$. This implies Case \ref{prop_enn_vert_ennminusone_edg_case_non_tu}. Otherwise, each $H_i$ with $i=1,\ldots,k$ has exactly one cycle in it, which implies $\ell = 0$, and then $t-k =1$. This implies Case \ref{prop_enn_vert_ennminusone_edg_case_tu}.
	\end{proof}

	\begin{theorem}\label{thm_casesQ_redo} 
		Let $G$ be a simple connected graph on $n\geq 2$ vertices and $m$ edges with the incidence matrix $N$. Let $i$ be an integer from $\{1,2,\ldots,n\}$. Let  $S$  be an $(n-1)$-subset of $\{1,2,\ldots,m\}$ and $H$ be a spanning subgraph of $G$ with edges indexed by $S$. Then one of the following holds for $H$.
		\begin{enumerate}
			\item \label{thm_casesQ_redo_case_tree}  $H$ is a tree. Then $\det(N(i;S]) = \pm 1$.
			\item \label{thm_casesQ_redo_case_even_cycle} $H$ has an even cycle and a vertex not on the cycle. Then $\det(N(i;S]) = 0$.
			\item \label{thm_casesQ_redo_case_non_tu} $H$ has no even cycles, but it has a connected component with at least two odd cycles and at least two connected components which are trees. Then $\det(N(i;S]) = 0$.
			\item \label{thm_casesQ_redo_case_tu} $H$ is a $TU$-subgraph of $G$ consisting of $c$  odd-unicyclic graphs $U_1, U_2, \ldots , U_c$ and a unique tree $T$. If $i$ is a vertex of $U_j$ for some $j=1,2,\ldots,c$, then $\det(N(i;S]) =0$. If $i$ is a vertex of $T$, then  $\det(N(i;S]) =\pm 2^c$.	
		\end{enumerate}
	\end{theorem}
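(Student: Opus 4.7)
I will apply Proposition~\ref{prop_enn_vert_ennminusone_edg} to the spanning subgraph $H$ of $G$ whose edges are indexed by $S$ (this $H$ has $n$ vertices and $|S|=n-1$ edges), noting that $N(i;S]$ is precisely the incidence matrix of $H$ with the row of vertex $i$ deleted, which I denote $N_H(i;)$.

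Case~\ref{thm_casesQ_redo_case_tree} is Lemma~\ref{tree_det}. For Case~\ref{thm_casesQ_redo_case_non_tu} and the $i \in U_j$ portion of Case~\ref{thm_casesQ_redo_case_tu}, the key observation is that there is a tree component of $H$ not containing $i$, so Lemma~\ref{lem_tree_and_one_more_edge} yields $\det(N_H(i;)) = 0$: in Case~\ref{thm_casesQ_redo_case_non_tu}, the counting in the proof of Proposition~\ref{prop_enn_vert_ennminusone_edg} gives $t-k \ge 2$ tree components, at least one of which avoids $i$; in Case~\ref{thm_casesQ_redo_case_tu}, the unique tree $T$ is disjoint from the odd-unicyclic component $U_j$ containing $i$. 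For the $i \in T$ portion of Case~\ref{thm_casesQ_redo_case_tu}, reordering rows and columns of $N_H$ component-by-component makes $N_H(i;)$ block diagonal with one block $N_T(i;)$ of size $(n_T-1)\times(n_T-1)$ and determinant $\pm 1$ by Lemma~\ref{tree_det}, together with one block $N_{U_j}$ of size $n_j \times n_j$ and determinant $\pm 2$ by Lemma~\ref{incidencedetoddcyclic} for each $j = 1, \ldots, c$; the product is $\pm 2^c$.

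Case~\ref{thm_casesQ_redo_case_even_cycle} is the main obstacle, because Lemma~\ref{lem_tree_and_one_more_edge} on its own only handles $i$ off a tree component. The proof of Proposition~\ref{prop_enn_vert_ennminusone_edg} still guarantees at least one tree component $T$, and the lemma suffices for $i \notin T$. For $i \in T$, let $H_1$ denote the component of $H$ containing the even cycle. If $H_1$ is not bipartite it must contain an odd cycle as well, so its cyclomatic number is at least $2$; redoing the counting in Proposition~\ref{prop_enn_vert_ennminusone_edg} with $\ell \ge 1$ forces a second tree $T' \ne T$, and since $T'$ avoids $i$, Lemma~\ref{lem_tree_and_one_more_edge} again applies. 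If instead $H_1$ is bipartite, fix a bipartition $V(H_1) = A \sqcup B$ and define $y \in \mathbb{R}^n$ by $y_v = +1$ for $v \in A$, $y_v = -1$ for $v \in B$, and $y_v = 0$ for $v \notin V(H_1)$. Every edge of $H$ either lies in $H_1$ (giving $y_u + y_v = 0$ by the bipartition) or has both endpoints off $H_1$ (giving $y_u + y_v = 0$ trivially), so $y^T N_H = 0$. Since $i \in T$ lies off $H_1$ we have $y_i = 0$, and $y$ is nonzero on $V(H_1)$; hence the restriction of $y$ to rows other than $i$ is a nonzero left null vector of $N_H(i;)$, giving $\det(N_H(i;)) = 0$.
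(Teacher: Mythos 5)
Your proposal is correct, and for Cases 1, 3 and 4 it follows essentially the paper's own route: the same case breakdown (the classification of graphs with $n$ vertices and $n-1$ edges), Lemma~\ref{tree_det} for the tree case, Lemma~\ref{lem_tree_and_one_more_edge} whenever some tree component avoids $i$, and the block-diagonal product $(\pm2)^c(\pm1)$ for the $TU$ case (one small caveat there: when $T$ is the single vertex $i$, the block $N_T(i;)$ is empty and Lemma~\ref{tree_det}, which assumes at least one edge, does not literally apply; the paper notes this explicitly, and your argument should too, though the conclusion $\pm2^c$ is unaffected). Where you genuinely diverge is the even-cycle case. The paper argues locally and uniformly on the columns indexed by the edges of the even cycle $C$: if $i\notin C$ the square submatrix on $V(C)\times E(C)$ is the incidence matrix of an even cycle, with determinant $0$ by Lemma~\ref{incidencedet}, and if $i\in C$ one augments by the zero row of a vertex $j\notin C$; either way those columns of $N(i;S]$ are dependent. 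You instead split on whether $i$ lies in a tree component, and for $i$ in the (necessarily existing) tree you split again on whether the even-cycle component $H_1$ is bipartite: in the non-bipartite subcase the cycle-space count forces a second tree avoiding $i$, reducing to Lemma~\ref{lem_tree_and_one_more_edge}, and in the bipartite subcase you exhibit the $\pm1$ bipartition vector, supported off $i$, as a left null vector of $N(i;S]$. Both arguments are sound; the paper's is shorter and needs no bipartiteness dichotomy, while yours is conceptually appealing in that it exploits the classical rank deficiency of incidence matrices of bipartite components (the same fact underlying the spectral characterization of bipartiteness for $Q$) and reuses the component-counting machinery rather than introducing a separate column-dependence argument.
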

	
\begin{proof}
	Suppose vertices and edges of $G$ are $1,2,\ldots,n$ and $e_1,e_2,\ldots,e_m$, respectively. Note that $m\geq n-1$ since $G$ is connected.  
	\begin{enumerate}
		\item  Suppose $H$ is a tree. Since $n\geq 2$, $H$ has an edge. Then by Lemma \ref{tree_det},  $\det(N(i;S]) = \pm 1$.

		\item Suppose $H$ contains an even cycle $C$ as a subgraph and a vertex $j$ not on $C$. 
		
		{\it Case 1.} Vertex $i$ is not in $C$\\
		Then the square submatrix $N'$ of $N(i;S]$ corresponding to $C$ has determinant zero by Lemma \ref{incidencedetoddcyclic}. Since entries in columns of $N(i;S]$ corresponding to $C$ that are outside of $N'$ are zero, the columns of $N(i;S]$ corresponding to $C$ are linearly dependent and consequently $\det(N(i;S])=0$. \\
		
		{\it Case 2.} Vertex $i$ is in $C$\\
		Since $i$ is in $C$, we have $j\neq i$. Consider the square submatrix $N'$ of $N(i;S]$ that has rows corresponding to vertex $j$ and vertices of $C$ excluding $i$  and columns corresponding to edges of $C$. Since vertex $j$ is not on $C$, the row of $N'$ corresponding to vertex $j$ is a zero row and consequently $\det(N')=0$. Since entries in columns of $N_i[S]$ corresponding to $C$ that are outside of $N'$ are zero, the columns of $N(i;S]$ corresponding to $C$ are linearly dependent and consequently $\det(N(i;S])=0$.

	\item Suppose $H$ has no even cycles, but it has a connected component with at least two odd cycles and at least two connected components which are trees.  Then vertex $i$ is not in one of the trees. Then $\det(N(i;S])=0$ by Lemma \ref{lem_tree_and_one_more_edge}.
	
		\item Suppose $H$ is a $TU$-subgraph of $G$ consisting of $c$  odd-unicyclic graphs $U_1, U_2, \ldots , U_c$ and a unique tree $T$. If $i$ is a vertex of $U_j$ for some $j = 1, \ldots, c$, then  $\det(N(i;S])=0$ by Lemma \ref{lem_tree_and_one_more_edge}. If $i$ is a vertex of the tree $T$, then $N(i;S]$ is a direct sum of incidence matrices of odd-unicyclic graphs $U_1, U_2, \ldots , U_c$  and the incidence matrix of the tree $T$ with one row deleted (which does not exist when $T$ is a tree on the single vertex $i$). By Lemma \ref{incidencedetoddcyclic} and \ref{tree_det}, $\det(N(i;S])=(\pm 2)^c\cdot (\pm 1)= \pm 2^c$.
	
	\end{enumerate}
	\end{proof}

	The preceding results are summarized in the following theorem.
	\begin{theorem}\label{mainlemma}
	Let $G$ be a simple connected graph on $n\geq 2$ vertices and $m$ edges with the incidence matrix $N$. Let $i$ be an integer from $\{1,2,\ldots,n\}$. Let  $S$  be an $(n-1)$-subset of $\{1,2,\ldots,m\}$ and $H$ be a spanning subgraph of $G$ with edges indexed by $S$. 
	\begin{enumerate}
	\item[(a)] If $H$ is not a $TU$-subgraph of $G$, then $\det(N(i;S])=0$.
	\item[(b)] Suppose $H$ is a $TU$-subgraph of $G$ consisting of $c$  odd-unicyclic graphs $U_1, U_2, \ldots , U_c$ and a unique tree $T$. If $i$ is a vertex of $U_j$ for some $j=1,2,\ldots,c$, then $\det(N(i;S]) =0$. If $i$ is a vertex of $T$, then  $\det(N(i;S]) =\pm 2^c$.	
	\end{enumerate}
	\end{theorem}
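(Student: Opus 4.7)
The plan is to read this statement as a consolidation of the work already done in Proposition \ref{prop_enn_vert_ennminusone_edg} and Theorem \ref{thm_casesQ_redo}, and to prove it by simply matching each of the four mutually exclusive structural cases of $H$ against the determinant value computed for it.

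First I would fix $H$ as the spanning subgraph of $G$ on the edges indexed by $S$, which has $n$ vertices and $|S|=n-1$ edges. Proposition \ref{prop_enn_vert_ennminusone_edg} then tells me that $H$ falls into exactly one of four cases: (1) $H$ is a tree, (2) $H$ contains an even cycle and a vertex off that cycle, (3) $H$ has no even cycle but a component with at least two odd cycles and two or more tree components, or (4) $H$ is a $TU$-subgraph (a disjoint union of $c\ge 0$ odd-unicyclic components together with exactly one tree $T$). Observe that cases (1) and (4) together are exactly the $TU$-subgraphs (case (1) being the $c=0$ subcase of (4), with $T=H$), while cases (2) and (3) comprise all non-$TU$-subgraphs.

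For part (a), the graph $H$ is not a $TU$-subgraph, so it falls into case (2) or (3); in each of these, Theorem \ref{thm_casesQ_redo} (parts \ref{thm_casesQ_redo_case_even_cycle} and \ref{thm_casesQ_redo_case_non_tu}) gives $\det(N(i;S])=0$ regardless of which vertex $i$ is deleted, so (a) follows immediately.

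For part (b), when $c\ge 1$ the assertion is precisely case \ref{thm_casesQ_redo_case_tu} of Theorem \ref{thm_casesQ_redo}: if $i$ lies on some $U_j$ then $\det(N(i;S])=0$, and if $i$ lies on $T$ then $\det(N(i;S])=\pm 2^{c}$. When $c=0$ the $TU$-subgraph $H$ is just the tree $T$, and case \ref{thm_casesQ_redo_case_tree} of Theorem \ref{thm_casesQ_redo} together with Lemma \ref{tree_det} give $\det(N(i;S])=\pm 1=\pm 2^{0}$, so the stated formula still holds uniformly. The only mild subtlety I expect to address is the bookkeeping that the $c=0$ subcase of (b) coincides with case (1) of the proposition and therefore does not need a separate argument; once that is pointed out, the proof is essentially a one-line appeal to the two previous results.
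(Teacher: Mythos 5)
Your proposal is correct and is essentially the paper's own argument: the paper introduces Theorem \ref{mainlemma} explicitly as a summary of Proposition \ref{prop_enn_vert_ennminusone_edg} and Theorem \ref{thm_casesQ_redo}, which is exactly the case-matching you carry out. Your only added content, noting that the tree case is the $c=0$ instance of part (b) so that $\det(N(i;S])=\pm1=\pm2^{0}$, is a correct and harmless piece of bookkeeping that the paper leaves implicit.
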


	For a $TU$-subgraph $H$ of $G$, the number of connected components that are odd-unicyclic graphs is denoted by $c(H)$. So a $TU$-subgraph $H$ on $n-1$ edges with $c(H)=0$ is a spanning tree of $G$.
	
	\begin{theorem}\label{mainresult}
	Let $G$ be a simple connected graph on $n\geq 2$ vertices $1,2,\ldots,n$ with the signless Laplacian matrix $Q$. Then 
	$$\det(Q(i))=\sum_{H}4^{c(H)},$$
	 where the summation runs over all $TU$-subgraphs $H$ of $G$ with $n-1$ edges consisting of a unique tree on vertex $i$ and  $c(H)$ odd-unicyclic graphs.
	\end{theorem}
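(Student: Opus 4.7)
The plan is to read off the theorem as an essentially immediate consequence of the Cauchy--Binet expansion in Observation \ref{CB3}(c) combined with the case analysis in Theorem \ref{mainlemma}. The starting point is the identity
\[
\det(Q(i)) \;=\; \sum_{S} \det(N(i;S])^{2},
\]
where $S$ ranges over the $(n-1)$-subsets of $\{1,2,\ldots,m\}$. Each such $S$ determines a spanning subgraph $H$ of $G$ on $n$ vertices with exactly $n-1$ edges, and my task is to identify which subsets contribute a nonzero term and to compute that contribution.

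First I would invoke Theorem \ref{mainlemma}(a) to kill every $S$ for which $H$ fails to be a $TU$-subgraph, so the sum collapses to a sum over $TU$-subgraphs with $n-1$ edges. For such an $H$, Lemma \ref{noofcyclesinTU} with $k=1$ guarantees that $H$ consists of exactly one tree $T$ together with $c(H)$ odd-unicyclic components $U_1,\ldots,U_{c(H)}$, which justifies speaking of ``the'' tree component in the statement. Then I would apply Theorem \ref{mainlemma}(b): if the chosen vertex $i$ lies in some odd-unicyclic component, $\det(N(i;S])$ vanishes and the term drops out; if instead $i$ lies in the (unique) tree $T$, then $\det(N(i;S]) = \pm 2^{c(H)}$, so
\[
\det(N(i;S])^{2} = 4^{c(H)}.
\]

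Finally I would note that the map $S\mapsto H$ is a bijection between $(n-1)$-subsets of the edge set and spanning subgraphs of $G$ on $n-1$ edges, so the surviving terms are indexed precisely by those $TU$-subgraphs $H$ with $n-1$ edges whose tree component contains $i$. Summing the contributions $4^{c(H)}$ over exactly this family yields
\[
\det(Q(i)) \;=\; \sum_{H} 4^{c(H)},
\]
as claimed.

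There is no real obstacle at this stage: all the hard combinatorial and linear-algebraic work has been absorbed into Observation \ref{CB3}(c), Lemma \ref{noofcyclesinTU}, and Theorem \ref{mainlemma}. The only minor care point is to be explicit that among $TU$-subgraphs with $n-1$ edges the tree component is unique (so the qualifier ``tree on vertex $i$'' in the statement is unambiguous), and that squaring erases the sign ambiguity $\pm 2^{c(H)}$ so the answer is the clean sum of $4^{c(H)}$ over the correct family.
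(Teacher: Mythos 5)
Your proposal is correct and follows essentially the same route as the paper: apply Observation \ref{CB3}(c) to expand $\det(Q(i))$ as $\sum_S \det(N(i;S])^2$, then use Theorem \ref{mainlemma} to discard non-$TU$-subgraphs and those where $i$ lies in an odd-unicyclic component, with the surviving terms contributing $(\pm 2^{c(H)})^2 = 4^{c(H)}$. Your explicit appeal to Lemma \ref{noofcyclesinTU} to justify uniqueness of the tree component is a small, welcome clarification but not a departure from the paper's argument.
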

	\begin{proof}
	By Observation \ref{CB3}, we have,
	$$\det(Q(i))=\sum_{S} \det(N(i;S])^2,$$
where the summation runs over all $(n-1)$-subsets $S$ of $\{1,2,\ldots,m\}$. By Theorem \ref{mainlemma}, we have,	
	$$\det(Q(i))=\sum_{S} \det(N(i;S])^2=\sum_{H}(\pm 2^{c(H)})^2=\sum_{H}4^{c(H)},$$	
	where the summation runs over all $TU$-subgraphs $H$ of $G$ with $n-1$ edges consisting of a unique tree on vertex $i$ and  $c(H)$ odd-unicyclic graphs.
	\end{proof}

	\begin{example}
	Consider the Paw $G$ and its signless Laplacian matrix $Q$ in Figure \ref{paw}. To determine $\det(Q(1))$, consider the $TU$-subgraphs of $G$ with $3$ edges consisting of a unique tree on vertex $1$: $H_1$, $H_2$, $H_3$, $H_4$ in Figure \ref{tu}. Note $c(H_1)=c(H_2)=c(H_3)=0$ and $c(H_4)=1$. Then by Theorem \ref{mainresult},
	$$\det(Q(1))=\sum_{H}4^{c(H)}=4^{c(H_1)}+4^{c(H_2)}+4^{c(H_3)}+4^{c(H_4)}=4^{0}+4^{0}+4^{0}+4^{1}=7.$$

	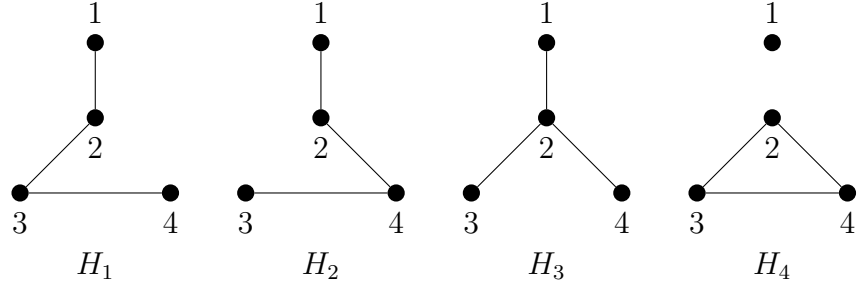
\begin{figure}
	\begin{center}
	\begin{tikzpicture}
	[scale=1,colorstyle/.style={circle, draw=black!100,fill=black!100, thick, inner sep=0pt, minimum size=2mm},>=stealth]
		\node (1) at (0,1)[colorstyle, label=above:$1$]{};
		\node (2) at (0,0)[colorstyle, label=below:$2$]{};
		\node (3) at (-1,-1)[colorstyle, label=below:$3$]{};
		\node (4) at (1,-1)[colorstyle, label=below:$4$]{};
		\node at (0,-1.5)[label=below:$H_1$]{};
		\draw [] (1)--(2)--(3)--(4);
		
		\node (1b) at (3,1)[colorstyle, label=above:$1$]{};
		\node (2b) at (3,0)[colorstyle, label=below:$2$]{};
		\node (3b) at (2,-1)[colorstyle, label=below:$3$]{};
		\node (4b) at (4,-1)[colorstyle, label=below:$4$]{};
		\node at (3,-1.5)[label=below:$H_2$]{};
		\draw [] (1b)--(2b)--(4b)--(3b);
		
		\node (1c) at (6,1)[colorstyle, label=above:$1$]{};
		\node (2c) at (6,0)[colorstyle, label=below:$2$]{};
		\node (3c) at (5,-1)[colorstyle, label=below:$3$]{};
		\node (4c) at (7,-1)[colorstyle, label=below:$4$]{};
		\node at (6,-1.5)[label=below:$H_3$]{};
		\draw [] (1c)--(2c)--(3c);	
		\draw [] (2c)--(4c);
		
		\node (1d) at (9,1)[colorstyle, label=above:$1$]{};
		\node (2d) at (9,0)[colorstyle, label=below:$2$]{};
		\node (3d) at (8,-1)[colorstyle, label=below:$3$]{};
		\node (4d) at (10,-1)[colorstyle, label=below:$4$]{};
		\node at (9,-1.5)[label=below:$H_4$]{};
		\draw [] (4d)--(2d)--(3d)--(4d);	
	\end{tikzpicture}
	\caption{$TU$-subgraphs of Paw $G$ with $3$ edges consisting of a unique tree on vertex $1$}\label{tu}
	\end{center}
	\end{figure}
	
	\end{example}

	\begin{corollary}
	Let $G$ be a simple connected graph on $n\geq 2$ vertices $1,2,\ldots,n$. Let $Q$ be the signless Laplacian matrix of $G$ with eigenvalues $\lambda_1\leq \lambda_2\leq \cdots \leq \lambda_n$. Then
	\begin{enumerate}
	\item[(a)]  $\det(Q(i))\geq t(G)$, the number of spanning trees of $G$, where the equality holds if and only if all odd cycles of $G$ contain vertex $i$.
	
	\item[(b)] $$\frac{1}{n}\disp\sum_{1\leq i_1<i_2<\cdots<i_n\leq n} \lambda_{i_1}\lambda_{i_2}\cdots \lambda_{i_{n-1}}
	=\frac{1}{n}\disp\sum_{i=1}^n\det(Q(i))\geq t(G),$$
	where the equality holds if and only if $G$ is an odd cycle or a bipartite graph.
	\end{enumerate}
	\end{corollary}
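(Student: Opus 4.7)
The plan is to deduce both parts of the corollary from the combinatorial formula $\det(Q(i)) = \sum_H 4^{c(H)}$ of Theorem \ref{mainresult}. For part (a), I would observe that a spanning tree of $G$ is exactly a $TU$-subgraph $H$ with $n-1$ edges whose unique tree component is the whole spanning tree (so $c(H)=0$, and this tree automatically contains $i$). Each spanning tree thus contributes $4^0 = 1$ to the sum, while every $TU$-subgraph with $c(H) \geq 1$ contributes at least $4$. Hence $\det(Q(i)) \geq t(G)$, with equality if and only if no $TU$-subgraph with $n-1$ edges, tree component containing $i$, and $c(H) \geq 1$ exists.

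To pin down the equality condition, I would establish the equivalence: such a $TU$-subgraph exists if and only if $G$ has an odd cycle avoiding $i$. The forward direction is immediate, since any odd-unicyclic component of $H$ is vertex-disjoint from the tree component and contains an odd cycle of $G$. For the converse, given an odd cycle $C$ of $G$ with $i \notin V(C)$, I would pick any edge $e \in C$, extend the path $C - e$ to a spanning tree $T$ of $G$, and note that $T + e$ has $C$ as its unique cycle. Since $i \notin V(C)$, the vertex $i$ lies in a subtree of $T$ hanging off some $w \in V(C)$ (after deleting the edges of $C - e$ from $T$); letting $e'$ be the first edge of the unique $w$-to-$i$ path in $T$, the graph $(T + e) - e'$ has $n-1$ edges and splits into an odd-unicyclic component containing $C$ and a tree component containing $i$, yielding the required $H$.

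For part (b), the first equality is the classical identity $\sum_{i=1}^n \det(Q(i)) = e_{n-1}(\lambda_1, \ldots, \lambda_n)$, obtained by comparing the coefficient of $x$ in $\det(xI - Q) = \prod_{j=1}^n (x - \lambda_j)$ with the standard expansion of the characteristic polynomial in terms of principal minors: the sum of all $(n-1) \times (n-1)$ principal minors of $Q$ equals the $(n-1)$st elementary symmetric polynomial of its eigenvalues. Summing the inequality of (a) over all $i$ and dividing by $n$ then yields the stated lower bound. For the equality case, I would need $\det(Q(i)) = t(G)$ for every vertex $i$, equivalently every odd cycle of $G$ contains every vertex. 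If $G$ has no odd cycle, $G$ is bipartite; otherwise some odd cycle $C$ must be Hamiltonian, and any additional edge $uv \notin E(C)$ would split $C$ into two arcs whose induced cycles together with $uv$ have lengths summing to $|V(C)| + 2$ (odd), forcing one to be an odd cycle of length less than $n$ missing some vertex — contradicting the hypothesis. Hence $G$ must equal that odd cycle. The converse is immediate from (a).

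The main obstacle will be the construction in the converse direction of part (a) — turning a single odd cycle missing $i$ into a full $TU$-subgraph whose tree component contains $i$ — but once $C - e$ is extended to a spanning tree of $G$, the choice of edge to delete is transparent, and the remaining arguments reduce to bookkeeping about principal minors and a short cycle-length parity argument.
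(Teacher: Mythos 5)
Your proof is correct and follows essentially the same route as the paper: both parts are deduced from the formula of Theorem \ref{mainresult} together with the classical identity expressing the $(n-1)$st elementary symmetric function of the eigenvalues as the sum of the $(n-1)\times(n-1)$ principal minors of $Q$. You in fact supply two details the paper leaves implicit, namely the explicit construction of a $TU$-subgraph whose tree component contains $i$ starting from an odd cycle avoiding $i$, and the chord-parity argument showing that if some odd cycle passes through every vertex then $G$ must be exactly that odd cycle.
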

	
	\begin{proof}
	\begin{enumerate}
	\item[(a)] First note that a $TU$-subgraph $H$ on $n-1$ edges with $c(H)=0$ is a spanning tree of $G$. Then $\det(Q(i))=\sum_{H}4^{c(H)}\geq \sum_{T}4^{0}$, where the sum runs over all spanning trees $T$ of $G$ containing vertex $i$. So  $\det(Q(i))$ is greater than or equal to the number of spanning trees of $G$ containing vertex $i$. Since each spanning tree contains vertex $i$, $\det(Q(i))\geq t(G)$ where the equality holds if and only if all odd-unicyclic subgraphs of $G$ contain vertex $i$ by Theorem \ref{mainresult}. Finally note that all odd-unicyclic subgraphs of $G$ contain vertex $i$ if and only if all odd cycles of $G$ contain vertex $i$.
	
	\item[(b)]  The first equality follows from the well-known linear algebraic result 
		\[	
			\disp\sum_{1\leq i_1<i_2<\cdots<i_n\leq n} \lambda_{i_1}\lambda_{i_2}\cdots \lambda_{i_{n-1}}=\disp\sum_{i=1}^n\det(Q(i)).
		\]
		Now by (a) $\det(Q(i))\geq t(G)$ where the equality holds if and only if all odd cycles of $G$ contain vertex $i$. Then 
		\[
			\frac{1}{n}\disp\sum_{i=1}^n\det(Q(i))\geq t(G)
		\] 
		where the equality holds if and only if $\det(Q(i))=t(G)$ for all $i=1,2,\ldots,n$. So the equality holds if and only if all odd cycles of $G$ contain every vertex of  $G$ which means $G$ is an odd cycle or a bipartite graph ($G$ has no odd cycles).
	\end{enumerate}
	\end{proof}

\section{Number of odd cycles in a graph}
In this section we find a combinatorial formula for $\det(Q)$ for the signless Laplacian matrix $Q$ of a given graph $G$. As a corollary we show that the number of odd cycles in $G$ is less than or equal to $\frac{\det(Q)}{4}$. 

\begin{proposition}\label{prop_enn_vert_ennminusone_edg}
		Let $H$ be a graph on $n$ vertices and $m = n$ edges. Then one of the following is true for $H$.
		\begin{enumerate}
		\item $H$ has a connected component which is a tree.
		\item  All connected components of $H$ are unicyclic and at least one of them is even-unicyclic.
		\item All connected components of $H$ are odd-unicyclic.
		\end{enumerate}
\end{proposition}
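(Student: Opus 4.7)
The plan is to mimic the argument of the earlier Proposition \ref{prop_enn_vert_ennminusone_edg} by decomposing $H$ into its connected components and tracking the ``excess'' invariant $m_i - n_i$ for each component $H_i$ with $n_i$ vertices and $m_i$ edges. The key facts to use are: any connected graph satisfies $m_i \geq n_i - 1$, with equality exactly when $H_i$ is a tree; and $m_i = n_i$ exactly when $H_i$ is unicyclic. The hypothesis $m = n$ then forces the tight identity
\[
\sum_{i=1}^{t} (m_i - n_i) \;=\; m - n \;=\; 0,
\]
where $t$ is the number of connected components.

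First I would split into two cases based on whether $H$ has a component that is a tree. If it does, we are immediately in Case~1 and nothing further needs to be shown. Otherwise every component $H_i$ contains at least one cycle, so $m_i - n_i \geq 0$ for every $i$. Combined with $\sum_i (m_i - n_i) = 0$, this forces $m_i - n_i = 0$ for every $i$, meaning every component is unicyclic.

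Once every component is known to be unicyclic, a last dichotomy finishes the argument: either all of the (unique) cycles are odd, placing us in Case~3, or at least one component carries an even cycle, placing us in Case~2. These three cases are mutually exclusive, matching the statement of the proposition.

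I do not foresee a real obstacle here; the whole argument rests on the elementary bound $m_i - n_i \geq -1$ for connected graphs and the specific equality $m - n = 0$. The only mild subtlety is to justify that $m_i \geq n_i$ whenever $H_i$ contains a cycle, which follows from standard cycle-plus-spanning-tree counting ($m_i - n_i + 1$ is the cycle rank of $H_i$, which is nonnegative and equals zero iff $H_i$ is a tree).
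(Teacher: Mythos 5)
Your proof is correct and follows essentially the same route as the paper: decompose $H$ into connected components, use the identity $\sum_i (m_i - n_i) = m - n = 0$ together with $m_i - n_i \geq 0$ for components containing a cycle, conclude every component is unicyclic when no tree component exists, and finish with the odd/even dichotomy. Your organization (splitting immediately on the existence of a tree component) is a slightly cleaner packaging of the same argument, so no further comment is needed.
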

	
\begin{proof}
Suppose $H$ has $t \geq 2$ connected components $H_i$ with $m_i$ edges and $n_i$ vertices, where the first $k$ of them have at least one cycle and the rest are trees. For $i= 1,\ldots, k$, $H_i$  has $m_i \geq n_i$. Note that
\begin{equation}\label{treenumber_m=n}
0= m - n = \sum_{i=1}^{t} (m_i - n_i) =  \sum_{i=1}^{k} (m_i - n_i) + \sum_{i=k+1}^{t} (m_i - n_i)
\end{equation}		
		
Since $H_i$  has a cycle for $i =1,\ldots,k$ and $H_i$ is a tree for $i = k+1,\ldots,t$,
		\[
			\ell := \sum_{i=1}^{k} (m_i - n_i) \geq 0,
		\]
		and 
		\[ 
			\sum_{i=k+1}^{t} (m_i - n_i) = -(t - k).
		\] 
Then $t-k = \ell$ by (\ref{treenumber_m=n}). If $H$ has a connected component which is a tree, we have Case 1. Otherwise $t-k =0$ which implies $\ell = \sum_{i=1}^{k} (m_i - n_i)=0$. Then $m_i= n_i$, for $i=1,2,\ldots,k$, i.e., all connected components of $H$ are unicyclic. If one of the unicyclic components is even-unicyclic, we get Case 2. Otherwise  all connected components of $H$ are odd-unicyclic which is Case 3. Finally if $H$ is connected, it is unicyclic and cosequently it is Case 2 or 3.
\end{proof}

\begin{lemma}\label{tree_and_one_more_edge}
Let $H$ be a graph on $n$ vertices and $n$ edges with incidence matrix $N$. If $H$ has a connected component which is a tree and an edge which is not on the tree, then $\det(N) = 0$.
\end{lemma}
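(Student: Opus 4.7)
The plan is to adapt the strategy used in Lemma \ref{lem_tree_and_one_more_edge}, but applied directly to the full square matrix $N$ instead of a submatrix $N(i;)$. Let $T$ be the tree component promised by the hypothesis, let $p$ denote the number of its vertices, so that $T$ has $p-1$ edges, and let $e_j$ be an edge of $H$ that does not lie on $T$. The key structural fact I would isolate first is that, since $T$ is a connected component of $H$, every edge of $H$ incident to a vertex of $T$ must itself belong to $T$. Consequently, in any row of $N$ indexed by a vertex of $T$, the only columns that can be nonzero are the $p-1$ columns indexed by the edges of $T$; in particular, the column of $e_j$ is zero in every such row.

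Next, following the style of Lemma \ref{lem_tree_and_one_more_edge}, I would form the square submatrix $N'$ of $N$ whose rows are indexed by the $p$ vertices of $T$ and whose $p$ columns are indexed by the $p-1$ edges of $T$ together with the edge $e_j$. By the observation just made, the column of $N'$ corresponding to $e_j$ is the zero vector, so $\det(N')=0$, which means that the $p$ rows of $N'$ are linearly dependent over the reals. Since the rows of $N$ indexed by vertices of $T$ vanish in every column outside the $p$ columns selected for $N'$, the same linear dependence relation lifts to the full-length rows of $N$ indexed by $T$. Hence $N$ has linearly dependent rows and $\det(N)=0$.

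I do not anticipate a real obstacle; the argument is a direct analog of the one already carried out for the $n$-vertex, $(n-1)$-edge case. The only minor point to handle carefully is the degenerate situation $p=1$, in which $T$ consists of a single vertex with no incident edges; then the corresponding row of $N$ is identically zero and $\det(N)=0$ follows immediately. This edge case is technically covered by the general argument above (the ``$p\times p$'' submatrix degenerates to a $1\times 1$ zero entry) but, as in the preceding lemma, it may be cleanest to dispose of it as an initial reduction before forming $N'$.
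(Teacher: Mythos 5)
Your proposal is correct and follows essentially the same argument as the paper: form the square submatrix $N'$ on the rows of the tree component $T$ and the columns of its edges together with $e_j$, note its $e_j$-column vanishes because $T$ is a whole component, and lift the resulting row dependence to $N$, handling the single-vertex tree case separately. No issues.
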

\begin{proof} 		
Let $H$ have a connected component $T$ which is a tree and an edge $e_j$ which is not on $T$. If $T$ consists of just one vertex, say $i$, then row $i$ of $N$ is a zero row giving $\det(N)=0$. Suppose  $T$ has at least two vertices. 
		Now consider  the square submatrix $N'$ of $N$ with rows corresponding to vertices of $T$ and columns corresponding to edges of $T$ together with $e_j$. Then the column of $N'$ corresponding to $e_j$ is a zero row giving $\det(N')=0$. Since entries in rows of $N$ corresponding to $T$ that are outside of $N'$ are zero, the rows of $N$ corresponding to $T$ are linearly dependent and consequently $\det(N)=0$.
\end{proof}

\begin{theorem}\label{oddunicycle}
Let $G$ be a simple graph on $n$ vertices and $m\geq n$ edges with the incidence matrix $N$. Let  $S$  be a $n$-subset of $\{1,2,\ldots,m\}$ and $H$ be a spanning subgraph of $G$ with edges indexed by $S$. Then one of the following is true for $H$: 
\begin{enumerate}
		\item $H$ has a connected component which is a tree. Then $\det(N[S])=0$.
		\item  All connected components of $H$ are unicyclic and at least one of them is even-unicyclic. Then $\det(N[S])=0$.
		\item $H$ has $k$ connected components which are all odd-unicyclic. Then $\det(N[S])=\pm 2^k$.
		\end{enumerate}
\end{theorem}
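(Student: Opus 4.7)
The strategy is to recognize $N[S]$ as the full $n \times n$ incidence matrix of the spanning subgraph $H$, so every determinant in the conclusion is the determinant of a graph's incidence matrix. Then Proposition~\ref{prop_enn_vert_ennminusone_edg} supplies three exhaustive structural possibilities for $H$, and each is dispatched using lemmas already in hand. Conceptually this is the ``keep every row'' analog of Theorem~\ref{thm_casesQ_redo}, but slightly easier, because there is no distinguished vertex $i$ whose component must be tracked.

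In Case (1), $H$ has a tree component $T$, say on $n_T$ vertices and $n_T-1$ edges. Since $H$ has $n$ edges in total, the remaining components collectively carry $n - n_T + 1 \geq 1$ edges, so $H$ has at least one edge off $T$. Lemma~\ref{tree_and_one_more_edge}, applied with $H$ itself as the graph and $N[S]$ as its incidence matrix, then gives $\det(N[S]) = 0$.

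In Cases (2) and (3), every connected component of $H$ is unicyclic. Grouping the rows of $N[S]$ by the component each vertex lies in, and grouping the columns by the component each edge lies in, places $N[S]$ into block-diagonal form with one block per component, each block being the full incidence matrix of that component. By Lemma~\ref{incidencedetoddcyclic}, each such block has determinant $\pm 2$ in the odd-unicyclic case and $0$ in the even-unicyclic case. Thus in Case (2), at least one singular block forces $\det(N[S]) = 0$; in Case (3), the product of the $k$ odd-unicyclic blocks gives $\det(N[S]) = \pm 2^k$, with the overall sign also absorbing the sign of the row and column permutations.

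The one point requiring any verification is the block-diagonal structure after grouping. This is immediate, because an edge of one component is incident only to vertices of the same component, so in any row indexed by a foreign component the entries in that column block vanish. Consequently no genuinely new technique is needed beyond those already used in Section~2, and the only ostensibly hard part, the sign bookkeeping in Case (3), evaporates because the conclusion is stated only up to $\pm$.
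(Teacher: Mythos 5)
Your proposal is correct and matches the paper's argument essentially step for step: Case (1) via Lemma~\ref{tree_and_one_more_edge} after noting an edge must lie off the tree component, and Cases (2)--(3) by observing that $N[S]$ decomposes as a direct sum of the incidence matrices of the unicyclic components, whose determinants are $0$ or $\pm 2$ by the unicyclic incidence lemma. Your explicit verification of the block structure and the citation of Lemma~\ref{incidencedetoddcyclic} (rather than the cycle-only Lemma~\ref{incidencedet}) are, if anything, slightly more careful than the paper's own wording.
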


\begin{proof}
$\;$
\begin{enumerate}
		\item Suppose $H$ has a connected component which is a tree. Since $H$ has $n$ edges, $H$ has an edge not on the tree. Then $\det(N[S])=0$ by Lemma \ref{tree_and_one_more_edge}.
		\item  Suppose all connected components of $H$ are unicyclic and at least one of them is even-unicyclic. Since $N[S]$ is a direct sum of incidence matrices of unicyclic graphs where at least one of them is even-unicyclic, then $\det(N[S])=0$ by Lemma \ref{incidencedet}.
		\item Suppose $H$ has $k$ connected components which are all odd-unicyclic. Since $N[S]$ is a direct sum of incidence matrices of $k$ odd-unicyclic graphs, then $\det(N[S])=(\pm 2)^k=\pm 2^k$ by Lemma \ref{incidencedet}.
		\end{enumerate}

\end{proof}

By Theorem \ref{CB} and \ref{oddunicycle}, we have the following theorem.

\begin{theorem}\label{detQ}
Let $G$ be a simple graph with signless Laplacian matrix $Q$. Then 
$$\det(Q)=\sum_{H} 4^{c(H)},$$
where the summation runs over all spanning subgraphs $H$ of $G$ whose all connected components are odd-unicyclic.
\end{theorem}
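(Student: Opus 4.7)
The proof is a direct application of the Cauchy-Binet formula to the factorization $Q = NN^T$ provided by Observation \ref{CB3}(a), followed by the case analysis already carried out in Theorem \ref{oddunicycle}. The plan mirrors how Theorem \ref{mainresult} was deduced from Theorem \ref{mainlemma}, but applied to $\det(Q)$ rather than the principal minor $\det(Q(i))$; correspondingly one sums over $n$-subsets of the edges (giving spanning subgraphs on $n$ edges) instead of $(n-1)$-subsets.

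First I would dispose of the degenerate range $m < n$ separately. Here $Q = NN^T$ is an $n \times n$ matrix whose rank is at most the number of columns of $N$, so $\det(Q) = 0$; on the right-hand side, any spanning subgraph of $G$ all of whose connected components are odd-unicyclic must have exactly $n$ edges, since each odd-unicyclic component contributes as many edges as vertices. This is impossible when $G$ itself has fewer than $n$ edges, so the sum is empty and also equals $0$. Hence one may assume $m \geq n$ for the remainder of the argument.

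For $m \geq n$, applying Theorem \ref{CB} with $A = B = N$ gives
$$\det(Q) = \det(NN^T) = \sum_S \det(N[;S])^2,$$
where $S$ ranges over all $n$-subsets of $\{1,2,\ldots,m\}$. Each such $S$ indexes a spanning subgraph $H$ of $G$ on $n$ edges, and by Theorem \ref{oddunicycle} the square matrix $N[S]$ has determinant $0$ unless every connected component of $H$ is odd-unicyclic, in which case $\det(N[S]) = \pm 2^{c(H)}$ and therefore $\det(N[S])^2 = 4^{c(H)}$. Re-indexing the sum over such subgraphs (and noting that non-surviving $S$ contribute $0$) yields the claimed identity.

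There is essentially no obstacle beyond what has already been proved: the combinatorial heart of the argument, that precisely the ``all components odd-unicyclic'' spanning subgraphs on $n$ edges survive with weight $\pm 2^{c(H)}$, is established in Theorem \ref{oddunicycle}, and Cauchy-Binet converts this into the determinantal formula after squaring. The only nontrivial bookkeeping is the $m < n$ case, which is immediate from the rank inequality.
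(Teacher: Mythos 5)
Your proof is correct and follows essentially the same route as the paper: apply Cauchy-Binet to the factorization $Q=NN^T$ and invoke Theorem \ref{oddunicycle} to identify the surviving $n$-edge spanning subgraphs with weight $4^{c(H)}$. Your separate treatment of the case $m<n$ (where both sides vanish) is a small extra precaution the paper leaves implicit, but it does not change the argument.
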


\begin{proof}
By Theorem \ref{CB} and Observation \ref{CB3},
$$\det(Q)=\det(NN^T)=\sum_{S} \det(N(;S])^2,$$
where the summation runs over all $n$-subsets $S$ of $\{1,2,\ldots,m\}$. By Theorem \ref{oddunicycle}, we have
$$\det(Q)=\sum_{S} \det(N(;S])^2=\sum_{H} (\pm 2^{c(H)})^2=\sum_{H} 4^{c(H)},$$
where the summation runs over all spanning subgraphs $H$ of $G$ whose all connected components are odd-unicyclic. 
\end{proof}

Let $ous(G)$ denote the number of spanning subgraphs $H$ of a graph $G$ where each connected component of $H$ is an odd-unicyclic graph. So $ous(G)$ is the number of $TU$-subgraphs of $G$ whose all connected components are odd-unicyclic. Note that $c(H)\geq 1$ for all spanning subgraphs $H$ of $G$ whose all connected components are odd-unicyclic. By Theorem \ref{detQ}, we have an upper bound for $ous(G)$.

\begin{corollary}
Let $G$ be a simple graph with signless Laplacian matrix $Q$. Then $\det(Q)\geq 4ous(G)$.
\end{corollary}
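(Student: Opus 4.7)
The plan is to derive the inequality as an immediate consequence of Theorem \ref{detQ} by bounding each summand of the combinatorial formula from below. Theorem \ref{detQ} provides the identity
\[
\det(Q) = \sum_{H} 4^{c(H)},
\]
where the sum ranges over all spanning subgraphs $H$ of $G$ whose connected components are all odd-unicyclic, and $c(H)$ is the number of such components. By definition of $ous(G)$, the index set of this sum is exactly the collection counted by $ous(G)$, so the sum has precisely $ous(G)$ terms.

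Next, I would observe that every $H$ appearing in this sum satisfies $c(H) \geq 1$. This is the key simple fact already noted in the paragraph preceding the corollary: because $H$ is spanning and every component of $H$ is odd-unicyclic (in particular nonempty), at least one such component must exist whenever the vertex set of $G$ is nonempty. Consequently $4^{c(H)} \geq 4^{1} = 4$ for each index $H$.

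Bounding each of the $ous(G)$ summands below by $4$ and summing gives
\[
\det(Q) = \sum_{H} 4^{c(H)} \;\geq\; \sum_{H} 4 \;=\; 4\,ous(G),
\]
which is the desired inequality. The only potential obstacle is the degenerate case $ous(G) = 0$, where the sum is empty; there both sides reduce to a trivial comparison (the right side is $0$ and $\det(Q) \geq 0$ since $Q$ is positive semidefinite), so the inequality still holds. No serious difficulty arises, as the statement is essentially a one-line consequence of Theorem \ref{detQ} together with the observation $c(H) \geq 1$.
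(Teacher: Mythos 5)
Your proof is correct and is essentially the same argument as the paper's: apply Theorem \ref{detQ}, note that every $H$ in the sum has $c(H)\geq 1$ so each term is at least $4$, and there are exactly $ous(G)$ terms. The remark about the empty-sum case is a harmless extra check.
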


For example, if $G$ is bipartite graph, then $\frac{\det(Q)}{4}=0=ous(G)$. If $G$ is an odd-unicyclic graph, then $\frac{\det(Q)}{4}=1=ous(G)$.\\

Note that by appending edges to an odd cycle in $G$ we get at least one $TU$-subgraph of $G$ with a unique odd-unicyclic connected component.   Let $oc(G)$ denote the number of odd cycles in a graph $G$. Then $oc(G)\leq ous(G)$, where the equality holds if and only if $G$ is a bipartite graph or an odd-unicyclic graph. Then we have the following corollary.

\begin{corollary}\label{numberofoddcycles}
Let $G$ be a simple graph with signless Laplacian matrix $Q$. Then  $\frac{\det(Q)}{4}\geq oc(G)$, the number of odd cycles in $G$, where the equality holds if and only if $G$ is a bipartite graph or an odd-unicyclic graph.
\end{corollary}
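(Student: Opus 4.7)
The plan is to split the inequality as $\det(Q)/4\ge ous(G)\ge oc(G)$ and then verify the equality characterization. The first step is immediate from Theorem \ref{detQ}: since $\det(Q)=\sum_H 4^{c(H)}$ where $H$ ranges over spanning subgraphs of $G$ whose connected components are all odd-unicyclic, and $c(H)\ge 1$ for each such $H$, we get $\det(Q)/4\ge ous(G)$, with equality iff every such $H$ has $c(H)=1$, i.e.\ is connected.

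For the second step $ous(G)\ge oc(G)$, I would construct an injection $\Phi$ from odd cycles of $G$ into the family counted by $ous(G)$. Given an odd cycle $C$, let $G_0$ be the component of $G$ containing $C$; contract $V(C)$ to a single vertex to form $G_0/C$, choose a spanning tree of $G_0/C$ by some canonical rule (say, lexicographically smallest), and lift it back to a connected subgraph of $G_0$ by reinstating the edges of $C$. The lifted subgraph has exactly $|V(G_0)|$ edges and its only cycle is $C$, so it is odd-unicyclic. Combining with a fixed odd-unicyclic spanning subgraph on each remaining component yields $\Phi(C)\in ous(G)$. The cycle $C$ is recoverable from $\Phi(C)$ as the unique cycle of the component containing $V(C)$, so $\Phi$ is injective.

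The equality analysis then combines both bounds. If $G$ is bipartite, both sides vanish since $G$ has no odd cycle and each bipartite component contributes a zero eigenvalue to $Q$. If $G$ is odd-unicyclic, then $G$ itself is the unique spanning subgraph with odd-unicyclic components, $c(G)=1$, and $\det(Q)/4=1=oc(G)$. Conversely, assume $G$ is connected, non-bipartite, and not odd-unicyclic; then $G$ has at least $n+1$ edges. Take $C$ to be a shortest odd cycle of $G$: any chord of $C$ would split $C$ into two smaller cycles whose lengths sum to $|C|+2$, so one would be an odd cycle shorter than $C$, contradicting minimality; hence $C$ is induced. The contraction $G/C$ then has $m-|C|\ge n-|C|+1$ edges on $n-|C|+1$ vertices and is therefore not a tree, so it has at least two distinct spanning trees. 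Their lifts produce two distinct connected odd-unicyclic spanning subgraphs of $G$ sharing the cycle $C$; only one can equal $\Phi(C)$, so $ous(G)>oc(G)$ and the full inequality is strict.

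The main obstacle is the equality case, and in particular the step that converts the non-odd-unicyclic hypothesis into the existence of an induced odd cycle whose contraction has at least two spanning trees. A subtlety worth flagging is that a disconnected $G$ with one bipartite component and one non-bipartite component can have $ous(G)=0$ while $oc(G)>0$, so the corollary is cleanest under a tacit connectedness assumption; an honest write-up either makes that assumption explicit or handles disconnected $G$ component-by-component.
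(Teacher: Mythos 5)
Your proposal is correct (under the connectedness assumption you flag) and follows the same chain the paper itself uses: $\det(Q)/4 \ge ous(G) \ge oc(G)$ via Theorem \ref{detQ}, with the equality analysis reduced to the second inequality. The difference is one of rigor rather than route: where the paper disposes of $oc(G)\le ous(G)$ and its equality characterization with a single sentence about ``appending edges to an odd cycle,'' you make the map explicit (contract the odd cycle $C$, lift a canonically chosen spanning tree of the contraction, check that the lift is a connected spanning odd-unicyclic subgraph whose unique cycle is $C$, hence the assignment is injective), and you actually prove strictness when $G$ is connected, non-bipartite and not odd-unicyclic: a shortest odd cycle is induced, and since $m\ge n+1$ its contraction is connected with at least as many edges as vertices, so it has two spanning trees whose lifts give two distinct members of the $ous$-family sharing the cycle $C$. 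None of that detail appears in the paper, which simply asserts the equality case. Your caveat about disconnected graphs is also well taken and in fact applies to the paper's statement as written: for a triangle together with an isolated vertex one has $\det(Q)=0$ while $oc(G)=1$, so the corollary genuinely requires $G$ to be connected (or at least that no bipartite component coexist with an odd cycle elsewhere), a hypothesis the paper leaves tacit in Section 3.
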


\section{Open Problems}
In this section we pose some problems related to results in Sections 2 and 3. First recall Corollary \ref{numberofoddcycles} which gives a linear algebraic sharp upper bound for the number of odd cycles in a graph. So an immediate question would be the following:
	
\begin{question}
Find a linear algebraic (sharp) upper bound of the number of even cycles in a simple graph.
\end{question}

To answer this one may like to apply Cauchy-Binet Theorem as done in Sections 2 and 3. Then a special $n\times m$ matrix $R$ will be required with the following properties:
\begin{enumerate}
\item $RR^T$ is a decomposition of a fixed matrix for a given graph $G$. 
\item If $G$ is an even (resp. odd) cycle, then $\det(R)$ is $\pm c$ (resp. zero) for some fixed nonzero number $c$.
\end{enumerate}

For other open questions consider a simple connected graph $G$ on $n$ vertices and $m\geq n$ edges with signless Laplacian matrix $Q$. The characteristic polynomial of $Q$ is 
$$P_Q(x)=\det(x I_n-Q)=x^n+\sum_{i=1}^n a_i x^{n-i}.$$
It is not hard to see that $a_1=-2m$ and $a_2=2m^2-m-\frac{1}{2}\sum_{i=1}^nd_i^2$ where $(d_1,d_2,\ldots,d_n)$ is the degree-sequence of $G$. 
Theorem 4.4 in \cite{CRC} provides a broad combinatorial interpretation for $a_i$, $i=1,2,\ldots,n$. A combinatorial expression for $a_3$ is obtained in \cite[Thm 2.6]{WHAB} by using mainly Theorem 4.4 in \cite{CRC}. Note that
$$a_3=(-1)^3\disp\sum_{1\leq i_1<i_2<i_3\leq n} \det(Q[i_1,i_2,i_3]).$$

So it may not be difficult to find corresponding combinatorial interpretation of $\det(Q[i_1,i_2,i_3])$ in terms of subgraphs on three edges. Similarly we can investigate other coefficients and corresponding minors which  we essentially did for $a_{n}$ and $a_{n-1}$ in Sections 3 and 2 respectively. So the next coefficient to study is $a_{n-2}$ which entails the following question:

\begin{question}
Find a combinatorial expression or a lower bound for $\det(Q(i_1,i_2))$.
\end{question}

By Cauchy-Binet Theorem,
 $$\det(Q(i_1,i_2))=\sum_{S} \det(N(i_1,i_2;S])^2,$$
where the summation runs over all $(n-2)$-subsets $S$ of the edge set $\{1,2,\ldots,m\}$. So it comes down to finding a combinatorial interpretation of  $\det(N(i_1,i_2;S])$.

\vspace*{24pt}

\end{document}